\documentclass[11pt, onecolumn]{elsarticle}

\usepackage[margin=1in]{geometry}

\usepackage{amsmath}

\usepackage{setspace}

\usepackage{amsfonts}
\usepackage{amssymb}

\usepackage{amsthm}
\theoremstyle{definition}
\newtheorem{definition}{Definition}
\theoremstyle{plain}
\newtheorem{theorem}{Theorem}
\theoremstyle{plain}
\newtheorem{corollary}{Corollary}
\theoremstyle{plain}
\newtheorem{lemma}{Lemma}

\usepackage{comment}

\usepackage{lipsum}
\usepackage{graphicx}

\usepackage{multirow}

\usepackage{tikz}
\usetikzlibrary{arrows,decorations.markings}
\usetikzlibrary{shapes.geometric, arrows}

\usepackage{ctable}
\usepackage{algorithm}
\usepackage{algpseudocode}
\algrenewcommand\algorithmicrequire{\textbf{Input:}}
\algrenewcommand\algorithmicensure{\textbf{Output:}}

\tikzstyle{startstop} = [rectangle, rounded corners, 
minimum width=3cm, 
minimum height=1cm,
text centered, 
draw=black, 
fill=red!30]

\tikzstyle{io} = [trapezium, 
trapezium stretches=true, 
trapezium left angle=70, 
trapezium right angle=110,  
minimum height=1cm, text centered, 
draw=black, fill=blue!30]

\tikzstyle{process} = [rectangle,  
minimum height=1cm, 
text centered, 
draw=black, 
fill=orange!30]

\tikzstyle{decision} = [diamond,  
minimum height=0cm, 
text centered, 
draw=black, 
fill=green!30]
\tikzstyle{arrow} = [thick,->,>=stealth]

\tikzstyle{process2} = [
    process,
    fill=blue!30
]

\newtheorem*{theorem-non}{Theorem}

\usepackage[colorlinks=true,breaklinks=true,bookmarks=false,urlcolor=blue,citecolor=blue,linkcolor=blue,bookmarksopen=false,draft=false]{hyperref}
\usepackage{doi}

\begin{document}

\begin{frontmatter}

\title{A Canceling Heuristic for the Directed Traveling Salesman Problem}

\author{Steffen Borgwardt\corref{cor1}\fnref{label1}} \ead{steffen.borgwardt@ucdenver.edu}
\cortext[cor1]{Corresponding author}

\affiliation[label1]{organization={Department of Mathematical and Statistical Sciences, University of Colorado Denver},
            addressline={1201 Larimer Street}, 
            city={Denver},
            postcode={80204}, 
            state={CO},
            country={USA}}

            \author{Zachary Sorenson\fnref{label1}} 
            \ead{zachary.sorenson@ucdenver.edu}

\begin{abstract}
The Traveling Salesman Problem (TSP) is one of the classic and hard problems in combinatorial optimization. We develop a new heuristic that uses a connection between Minimum Cost Flow Problems and the TSP to improve on a given suboptimal tour, such as a local optimum found using a classic heuristic.

Minimum Cost Flow Problems can be solved efficiently through linear programming or combinatorial algorithms based on cycle canceling. We investigate the potential of flow-canceling in the context of the TSP. Through a restriction of the search space to cycles and circulations that alternate between arcs in- and outside of the tour, practical results exhibit that only a low number of subtours is created, and a lightweight patching step suffices for a high success rate and gap closure towards an optimum.
\end{abstract}

\begin{keyword}Traveling Salesman \sep Network Flows \sep Heuristic \sep Linear Programming

\MSC[2020] 90C05 \sep  90C27 \sep  90C35 \sep  90C59
\end{keyword}

\end{frontmatter}

\section{Introduction}

The {\em Traveling Salesman Problem (TSP)} is one of the fundamental problems in combinatorial optimization and integer programming \cite{abcc-07,pcss-24}. A standard description of the classic TSP is as follows \cite{Im-14}: A traveling salesman must visit $n$ cities and return to an initial city. The route that he travels is called a {\em tour}. Each time he travels from some city $i$ to some city $j$, it comes at a cost $c_{ij}>0$. To solve this problem, one must find a tour that minimizes the total cost of traveling to every city (and returning to the first city). There are several variations of the problem. For example, if there exists a pair of cities $i$ and $j$ in a TSP where $c_{ij} \neq c_{ji}$, then the problem is referred to as the {\em directed} or {\em asymmetric TSP}. If $c_{ij} = c_{ji}$ for every pair of cities $i$ and $j$, the problem is known as a {\em symmetric TSP}. Other common variants of the TSP include the {\em Euclidean TSP}, for which the distances between cities correspond to distances in the Euclidean plane. In most settings, distances for the TSP are assumed to at least satisfy the triangle inequality. This assumption leads to an optimal tour avoiding any repeat of a city. In this work, we do \emph{not} assume satisfaction of the triangle inequality, but we search for a tour without repetition of cities, as is standard.

The TSP is readily represented on a graph $G=(V,E,c)$ with a vertex set $V$, an edge set $E$, and edge costs $c$. The goal is the search for a Hamiltonian cycle of minimum cost. Symmetric TSPs are represented on undirected graphs (using undirected edges), while directed TSPs are represented on directed graphs (using directed arcs). For all variants, it is safe to assume a complete, loopfree graph and penalize undesired edges sufficiently so that they cannot appear in an optimal solution. 

In this work, we are interested in the directed TSP and devise a new heuristic for it based on a network flows approach. In Section \ref{sec:relatedwork}, we recall some necessary background from the literature. In Section \ref{sec:contributions}, we explain our contributions and provide an outline for the remainder of the paper. 

\subsection{Background}\label{sec:relatedwork}

It is well known that, unless $\mathcal{P}=\mathcal{NP}$, there cannot exist an exact efficient algorithm to solve the TSP. All common variants of TSP are known to be $\mathcal{NP}$-Hard; see e.g., \cite{B-71,Gar-79,C-77,W-87}. A naive approach to solve the problem exactly is to check every possible order of locations at an exponential running time of $O(n!)$. Alternatively, there exist dynamic programming algorithms that achieve a more efficient running time of $O(n^2 2^n)$ for such a check \cite{B-76}. 

In practice, local optima for the directed TSP are found using a variety of heuristics \cite{rggo-11}. Two classic heuristics, the $k$-opt algorithm and the patching algorithm, are relevant to our work. 

The {\em $k$-opt algorithm} \cite{J-95} is an iterative, local search algorithm. An initial tour and the number $k$ are chosen at the start. In each iteration, one selects and removes $k$ edges from the current tour $T$ to obtain $k$ segments of $T$. Then, one tests all the possible orderings of the segments to find a better tour. In graph-theoretical terms, each of these orderings corresponds to a reconnection of the segments at their endpoints through $k$ new edges. This process is repeated until no selection of $k$ edges can improve the current tour anymore. The widely-used {\em Lin-Kernighan algorithm} \cite{Keld,K-73} generalizes this idea to not fix $k$ at the start, but instead increase it dynamically if no improvement is possible for the current $k$; upon an improvement, $k$ is reset. $k$-opt and Lin-Kernighan are well-performing and popular heuristics for the symmetric TSP.  For directed TSP, they also are a common approach, but their performance is impacted by the fact that the reconnection of segments may lead to the reversal of the tour on one or more segments. The costs of the segment and reversed segment are not necessarily related. 

The {\em patching algorithm} \cite{Patch-78} first computes a cheapest cover of $V$ by a set of directed cycles. This leads to a partition of $V$ into vertex-disjoint directed cycles on proper subsets of $V$ such that each vertex lies in some cycle. As long as there exist two or more cycles, the algorithm performs a {\em patching operation}, or a \emph{patch}, of two cycles: The simultaneous deletion of an arc from each cycle and the addition of two new arcs, such that the total number of cycles in the partition is reduced by one. This process is repeated until a tour is formed. Unlike $k$-opt, the patching algorithm is not an iterative algorithm.

Mathematical programming-based approaches to the directed TSP build on one of multiple formulations as an integer program. One such formulation is as follows \cite{P-03}:

\vspace*{-0.5cm}

\begin{align*}\label{TSP-IP}\tag{TSP-IP}
    \text{minimize} \hspace{2mm} \sum_{i\in V} \sum _{j\in V\backslash\{i\}} x_{ij} \cdot   c_{ij} & \\
    \sum_{i \in S}\sum_{j \in S\backslash\{i\}} x_{ij} & \leq |S|-1 &  \forall\ S \subsetneq V, |S| \geq 2 \\
    \sum_{i \in V\backslash\{j\}}  x_{ij} & = 1  &  \forall\ j \in V \\
    \sum_{j \in V\backslash\{i\}}  x_{ij} & = 1  &  \forall\ i \in V \\
      x_{ij} & \in \{0,1\} & \forall\ i \in V, j \in V\backslash\{i\}
\end{align*}

Decision variables $x_{ij} \in \{0,1\}$ represent whether arc $(i,j)$ is part of the tour $T=x$. The objective function is the total sum of the costs of each arc used.  The second and third sets of constraints ensure that every vertex in a solution has one ingoing and one outgoing arc. The first set of constraints, known as the {\em subtour elimination constraints}, prevents the formation of {\em subtours}, i.e., directed cycles on proper subsets of $V$, by checking that each proper subset of vertices in the graph does not include enough arcs to form one. Note that IP (\ref{TSP-IP}) is of exponential size due to the specification of a subtour elimination constraint for each proper subset $S$ of $V$ of size $|S|\geq 2$. In practice, classic tools of integer programming -- such as branch-and-cut and decomposition models \cite{shwl-26} -- are used in combination with powerful metaheuristics to approach an exact or approximate solution without the need for a complete specification. This remains a highly active field of study; see \cite{Im-14} and references therein, as well as the seminal work \cite{APP} and related Concorde solver. Heuristics are not only the standard approach when a locally optimal solution is sufficient, but also can be a key subroutine for approximate or exact solution.

In this work, we are interested in repurposing tools commonly used for network flows problems for tour improvement. To this end, a key observation is that IP~(\ref{TSP-IP}) can be viewed as a restriction of a $0,1$-circulation LP. Circulation problems are special minimum cost flow problems with flow balance at each vertex. We here use unit capacities on arcs; a $0,1$-circulation is restricted to unit flow, i.e., flow $0$ or $1$ on each arc. A formulation is as follows \cite{amo-93}:

\vspace*{-0.5cm}

\begin{align*}\label{Circ}\tag{Circulation}
    \text{minimize} \hspace{2mm} \sum_{i\in V} \sum _{j\in V\backslash\{i\}} x_{ij} \cdot   c_{ij} & \\
    \sum_{j \in V\backslash\{i\}}  x_{ij} -\sum_{j \in V\backslash\{i\}}  x_{ji} & = 0  &  \forall\ i \in V \\
     0 \leq x_{ij} & \leq 1 & \forall\ i \in V,j \in V\backslash\{i\}
\end{align*}
The first set of constraints specifies balance of inflow and outflow at each vertex. They form a totally-unimodular constraint matrix, which implies that for integral right-hand sides all vertices of the underlying polytope are integral. Thus, it suffices to specify $0 \leq x_{ij} \leq 1$ instead of $x_{ij} \in \{0,1\}$. Further, all feasible $0,1$-circulations are vertices.

Any tour in $G$, i.e., any feasible solution to IP (\ref{TSP-IP}), is a special, spanning $0,1$-circulation (no subtours are formed and each vertex has inflow and outflow one)  and thus corresponds to a vertex of LP (\ref{Circ}). Formally, the difference between IP~(\ref{TSP-IP}) and LP~(\ref{Circ}) is the drop of subtour elimination constraints and the summation (or difference) of the constraints for ingoing and outgoing arcs. The objective function remains the same. 

This modeling link lets us import cycle‑canceling machinery from classic min‑cost flow algorithms, which build a residual graph $R=(G,x)$ from the graph $G$ and current flow $x$, search for a negative cycle $C$ in $R$, and then \emph{cancel} this cycle, i.e., send flow along it, to obtain a new flow of lower cost \cite{amo-93}. Specializing to a flow $x$ that corresponds to a tour $T$ and arcs with unit capacities, 
we work with a special residual network $R=(G,T)$, which we call a \emph{tour graph}; we provide a formal definition in Section \ref{sec:cancel}, Definition \ref{def:tourgraph}. At the heart of this work is a \emph{cancel step} that sends unit flow along carefully structured cycles or circulations in the tour graph. 

\subsection{Contributions and Outline}\label{sec:contributions}
We introduce \textsc{Cycap}, for ``cycle cancel and patch'', a heuristic for the directed Traveling Salesman Problem (and also applicable to the symmetric TSP). The method repurposes well–known ideas from minimum–cost network flows -- residual networks and cycle canceling -- to improve a given tour. At a high level, \textsc{Cycap} (i) constructs a residual network based on the current tour, (ii) detects a negative ``tour-alternating'' cycle or a circulation in a carefully chosen search space, (iii) performs a unit–flow \emph{cancel step} along that structure to reduce cost, and (iv) patches any subtours resulting from the cancel step back together. 

Our aim is to offer a routine to escape local optima found by classic local search heuristics (such as $2$–opt or $3$–opt) at practical running times. For this reason, the steps of \textsc{Cycap} are embedded in a larger computational process. A visual representation of such a complete process, where \textsc{Cycap} is combined with a $k$-opt algorithm, is provided as a flow chart in Figure \ref{fig:flow}. The red ovals indicate start and end of the process; it begins with an input graph and ends with a final tour. The blue, double-bordered rectangles and brown, single-bordered rectangle represent subroutines of the heuristic. Blue double–bordered rectangles denote subroutines drawn and tailored from the literature. Green diamonds are decision points (automated checks or user choices). We highlight the {\em Separation of Tour Graph} step for two reasons: It is the entry point into the core of \textsc{Cycap} embedded in this larger process, and also a key aspect in the design of our algorithm.

 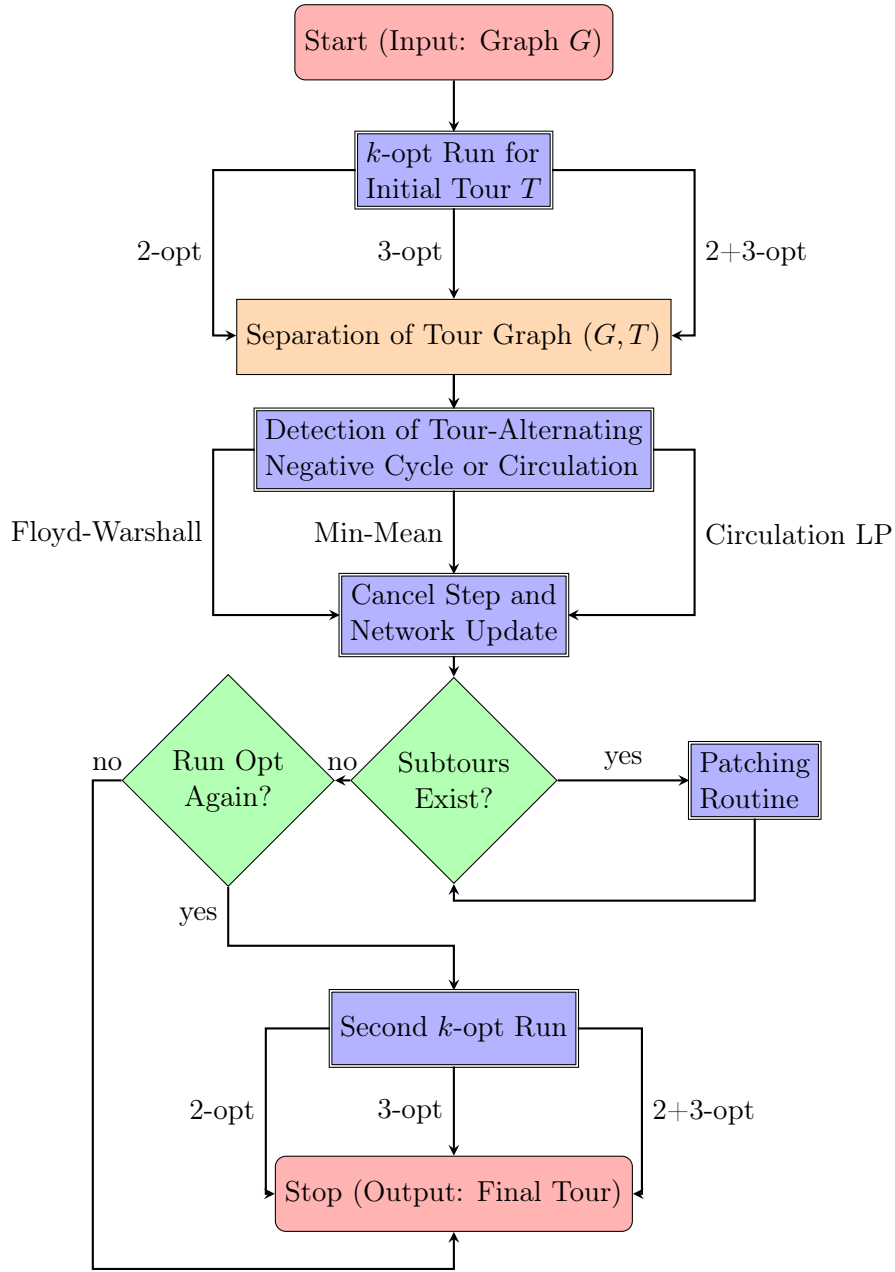
\begin{figure}

 \begin{center}
 \begin{tikzpicture}[node distance=1.5cm]

 \node (start) [startstop] {Start (Input: Graph $G$)};

 \node (first1) [process2, below of=start, yshift=-0.2cm, align=left,double] {$k$-opt Run for\\ Initial Tour $T$};

 \node (proc2a) [process, below of=first1, yshift=-0.72cm, align=left] {Separation of Tour Graph $(G,T)$};

 \node (out1) [process2, double, below of=proc2a, yshift=-0cm, align=left] {Detection of Tour-Alternating\\ Negative Cycle or Circulation};

 \node (pro3a) [process2, double, below of=out1, yshift=-0.7cm, xshift=0cm,align=center] {Cancel Step and\\ Network Update};

 \node (deci1) [decision, below of=pro3a,align=center,yshift=-0.7cm] {Subtours\\Exist?};

 \node (pro4a) [process2, double, right of=deci1, xshift=2.5cm,align=left] {Patching\\Routine};

 \node (decia1) [decision, left of=deci1, xshift=-1.5cm,align=center] {Run Opt\\Again?};

 \node (last1) [process2,,double,below of=deci1, yshift=-1.8cm,align=center] {Second $k$-opt Run};

 \node (stop) [startstop,below of=deci1, yshift=-4cm,align=center] {Stop (Output: Final Tour)};

 \draw [arrow] (start) -- (first1);
 \draw [arrow] (first1) -- node[anchor=east] {3-opt} (proc2a);
 \draw [arrow] (first1) --  ++(-3.2,0) -- node[anchor=east,yshift=-0.45] {2-opt} ++(0,-2.2) -- (proc2a);
 \draw [arrow] (first1) --  ++(3.2,0) -- node[anchor=west,yshift=-0.45] {2+3-opt} ++(0,-2.2) -- (proc2a);
 \draw [arrow] (proc2a) -- (out1);
 \draw [arrow] (out1) -- node[anchor=east] {Min-Mean} (pro3a);
 \draw [arrow] (out1) --  ++(-3.2,0) -- node[anchor=east,yshift=-0.45] {Floyd-Warshall} ++(0,-2.2) -- (pro3a);
 \draw [arrow] (out1) --  ++(3.2,0) -- node[anchor=west,yshift=-0.45] {Circulation LP} ++(0,-2.2) -- (pro3a);
 \draw [arrow] (proc2a) -- (out1);
 \draw [arrow] (pro3a) -- (deci1);
 \draw [arrow] (deci1) -- node[anchor=south] {yes} (pro4a);
 \draw [arrow] (pro4a) -- ++(0,-1.6) -- ++(-4,0) -- (deci1);
 \draw [arrow] (deci1) -- node[anchor=south] {no}  (decia1); 
 \draw [arrow] (decia1) --node[anchor=south] {no} ++(-1.8,0) -- ++(0,-6.5) -- ++(4.8,0) --  (stop);
 \draw [arrow] (decia1) --node[anchor=east] {yes} ++(0,-2.2) -- ++(3,0) --  (last1);
 \draw [arrow] (last1) -- node[anchor=east] {3-opt} (stop);
 \draw [arrow] (last1) --  ++(-2.5,0) -- node[anchor=east,yshift=-0.45] {2-opt} ++(0,-2.2) -- (stop);
 \draw [arrow] (last1) --  ++(2.5,0) -- node[anchor=west,yshift=-0.45] {2+3-opt} ++(0,-2.2) -- (stop);

 \end{tikzpicture}

 \caption{Flow Chart for the \textsc{Cycap} Heuristic}\label{fig:flow}

 \end{center}
 \end{figure}

Let us provide a brief overview of the computational process visualized in Figure~\ref{fig:flow}.  Beginning with an input graph $G$, a baseline $k$-opt run produces a locally optimal tour that serves as the starting point (\emph{“$k$–opt Run for Initial Tour $T$”}). The indicated variants are $2$-opt, $3$-opt, and $2+3$-opt, which refers to using $3$-opt when $2$-opt gets stuck. The core of \textsc{Cycap} then begins via the \emph{Separation of Tour Graph $(G,T)$}: We build the residual \emph{tour graph} $R=(G,T)$ and transform it into a special bipartite graph $B$, which we will call a \emph{separated graph}; this transformation guarantees that cycles detected in $B$ map back to \emph{tour–alternating} cycles in $R$ with identical cost. Next comes the \emph{Detection of Tour–Alternating Negative Cycle or Circulation}, which is performed through a classic negative cycle or circulation detection routine on $B$. The three indicated variants are the Floyd–Warshall algorithm with an adjusted predecessor readout; the search of a minimum-mean cycle; or the search of a minimum–cost circulation via a linear program.  The selected structure is then translated back to $R$ and applied in the \emph{Cancel Step and Network Update}: We send unit flow along the cycle/circulation in $R$ to reduce cost. This update may result in a collection of subtours and possibly some isolated vertices. At the first decision point (\emph{Subtours Exist?}), we decide if a \emph{Patching Routine} is required to reconnect them into a single tour. A second decision point (\emph{Run Opt Again?}) optionally triggers a \emph{Second $k$–opt Run} starting from the current tour, after which the process terminates with the final tour. In this work, we present some structural insight, graph constructions, algorithms, and computational evidence related to this pipeline. 

In Section \ref{sec:cancel}, we first define the \emph{tour graph} $R=(G,T)$ (Definition  \ref{def:tourgraph}) and
\emph{tour–alternating cycles and circulations} (Definitions \ref{def:ta-cycle} and \ref{def:ta-circ}) -- structures in $R$ whose arcs alternate between reversing a tour arc and inserting a non–tour arc. Section \ref{sec:searchspace} is then dedicated to an explanation and motivation of these tour-alternating cycles and circulations as a search space for cancellation on the current tour. We begin with an example to illustrate the concept and its potential to leave a local optimum for $k$-opt, and proceed into some formal observations. First,  we show that any simple cycle whose cancel produces a new tour must be tour–alternating, and that canceling a tour–alternating cycle returns either a tour or a set of vertex‑disjoint subtours (Theorem~\ref{thm:single-cycle}). Further, whenever a non‑simple cycle or a circulation yields a tour, it decomposes into a collection of arc‑disjoint tour–alternating cycles (Theorem~\ref{thm:alt-decomp}). Most of the arguments for the cancellation of a cycle transfer to circulations, but become more technical: A circulation may pick up opposite non-tour arcs in the tour graph, which we choose to trim. The result is the creation of a tour, a collection of subtours, or subtours together with some isolated vertices (Corollary~\ref{cor:single-cycle}). The key takeaway is that, regardless of the variant, a cancel step along a tour–alternating cycle or circulation remains in the proximity of a tour, and a lightweight patching routine suffices to return to a tour.

In Section \ref{sec:compstrategy}, we lay out an efficient computational strategy to find tour-alternating structures. We define the bipartite \emph{separated graph} $B$ (Definition \ref{def:separated}) obtained from the tour graph $R=(G,T)$ by duplicating the vertex set and routing insertion and removal arcs in opposite directions. In this construction, we retain a cycle/cost correspondence (Theorem \ref{thm:sepgraph-correspondence}): Every directed cycle in $B$ bijectively maps to a tour–alternating cycle in $R$ with identical total cost, and the same statements extend cyclewise to circulations. A simple and efficient matrix–based construction of $B$ (Algorithm \ref{alg:transf})  builds the distance and predecessor matrices in running time $\Theta(|V|^2)$  (Lemma~\ref{lem:sep-construct}). 

In Section \ref{sec:implementation}, we provide some implementation details for practical realizations of \textsc{Cycap} and the supporting subroutines. In Section \ref{sec:variants}, we distinguish three variants we implemented that differ based on how they operate on the separated graph $B$: \textsc{Cycap–F}, where we run Floyd–Warshall to completion to gather a set of candidate cycles; \textsc{Cycap–M}, where we search for a minimum–mean cycle; and \textsc{Cycap–C}, where we solve a minimum–cost circulation problem. In Section \ref{sec:tailoredclassic}, we describe the tailoring of three classic algorithms that enable the variants of \textsc{Cycap}. First, we use an adjusted $k^{\ast}$–opt algorithm for directed instances that explicitly accounts for segment reversals (Algorithm~\ref{alg:kstar-opt}), and thus gives better local optima. Second, we use an adjusted predecessor readout, a variant of Floyd-Warshall predecessor tracking designed to harvest all negative cycles found in a complete run (Algorithm~\ref{alg:adjreadout}). This routine is used in \textsc{Cycap-F}. And finally, we use an adjusted patching routine that merges subtours by cheapest two‑arc patches and absorbs isolated vertices by a three‑arc rule.

In Section \ref{sec:experiments}, we present our computational experiments on benchmark instances from the widely-used TSPLIB repository \cite{TLIB}. We describe experiment setup and protocol, and then report on three metrics: \emph{success rate} (improvement after \textsc{Cycap} alone and after \textsc{Cycap} plus a second $k$–opt run), \emph{gap closure} toward the optimal or best known tour, and \emph{running time} (both \textsc{Cycap}–only and the overall process). We observe that on directed instances \textsc{Cycap–C} consistently achieves very high success rates both immediately after applying \textsc{Cycap} and after the optional second $k$–opt run, closes a large fraction of the gap to an optimum (often exceeding $70\%$), and runs faster than the cycle–based variants. In fact, \textsc{Cycap–C} is substantially faster than \textsc{Cycap–F} and \textsc{Cycap–M} on larger graphs -- often by an order of magnitude -- and the optional second $k$–opt run is typically brief because \textsc{Cycap–C} itself already closes much of the gap to an optimum. On symmetric instances, the cycle–based variants \textsc{Cycap–F} and \textsc{Cycap–M} perform competitively, as well, especially when followed by the additional $k$–opt run. In this setting, the three embedded variants have comparable total running times, while \textsc{Cycap–C} is the fastest when measured by \textsc{Cycap}–only time. 

We conclude in Section \ref{sec:conclusion} with some practical guidance and an outlook on promising directions of future work. A recurring empirical feature across our experiments is that cancel steps typically generate only a few subtours; this keeps the patching routine to minimal impact and is a key contributor to a strong overall performance. 
The creation of few subtours is linked to tour-alternating cycles and circulations having only few arcs. We share some basic observations about this link, and are interested in a deeper theoretical understanding of it.

\section{The Cancel Step}\label{sec:cancel}
This section develops the cancel step that sits at the heart of our heuristic. In Section~\ref{sec:searchspace} we formalize the \emph{search space} of \emph{tour–alternating cycles and circulations}, explain why this restriction guarantees that a cancel step produces either a single tour or a collection of subtours, and record basic structural properties needed later. In Section~\ref{sec:compstrategy} we outline the \emph{computational strategy}: how we detect such cycles or circulations efficiently via a \emph{separated graph} on which we apply classical routines, and how a unit–flow cancel translates back to the tour graph. We begin with some formal terminology and notation.

We work on a complete directed graph $G=(V,E,c)$ with vertex set $V=\{1,\dots,n\}$, arc set $E=V\times V\setminus\{(i,i):i\in V\}$, and nonnegative arc costs $c_{i,j}$ for $(i,j)\in E$.
A \emph{tour} $T$ in $G$ is a directed Hamiltonian cycle and is represented by a \emph{successor map} $\pi:V\to V$, a permutation with $\pi(i)\neq i$ for all $i$, so that the arc $(i,\pi(i))$ belongs to the current tour. Its cost is
\[
c(T)\;=\;\sum_{i\in V} c_{i,\pi(i)}\,.
\]
We denote by $R=(G,T)$ the \emph{tour graph}, i.e., a residual network built from $G$ with unit capacities and the unit flow induced by $T$. A formal definition is as follows.

\begin{definition}[Tour graph]\label{def:tourgraph}
Let $G=(V,E,c)$ be a complete directed graph with arc costs $c_{i,j}\ge 0$, and let $T$ be a tour given by its successor map $\pi:V\to V$. Define unit capacities $u_{i,j}=1$ 
on every arc, and the flow induced by $T$ as
\[
x_{i,j}\;=\;\begin{cases}
1, & j=\pi(i),\\[0.25ex]
0, & \text{otherwise}.
\end{cases}
\]
The \emph{tour graph} is $R=(G,T)=(V,E_R,c^R)$, where
\[
E_R \;=\; \{(h,k)\in E:\,(h,k),(k,h)\notin T\}\ \cup\ \{(\pi(i),\,i):\,i\in V\}.
\]
Residual costs are
\[
c^R_{h,k}=c_{h,k}\quad\text{for }(h,k),(k,h)\notin T,
\qquad
c^R_{\pi(i),\,i}=-\,c_{i,\pi(i)}\quad\text{for }(i,\pi(i))\in T,
\]
and all residual arcs have unit capacity.
\end{definition}

We will call the two types of arcs in $E_R$ \emph{insertion} arcs $E^{+}=\{(h,k)\in E:\,(h,k),(k,h)\notin T\}$, adding flow along non-tour arcs with cost $+c$, and \emph{removal} arcs $E^{-}=\{(\pi(i),\,i):\,i\in V\}$, removing flow from tour arcs via their reverse with cost $-c$. Note that $E^{+}$ consists of pairs $(h,k),(k,h)$ of opposite arcs (retaining their original costs) where neither of them is in $T$, whereas arcs $(i,\pi(i))$ do not appear in $E_R$.

The same notation for map, cost, and associated flow is used for \emph{subtours}, which are directed cycles supported on a proper subset of $V$. When we refer to a \emph{collection of subtours}, we assume that the collection is \emph{spanning} unless stated otherwise, i.e., all vertices in $V$ are part of some subtour. We also assume that subtours in a collection are vertex- and arc-disjoint. When a collection of subtours is not spanning, there exist one or more \emph{isolated vertices}, i.e., vertices with in-degree and out-degree zero in the current flow.

When discussing indices on cyclic examples, we write them modulo $n$ and use $0 = n$ for convenience.
For any set $S\subseteq V$, we write $G[S]$ for the induced subgraph and use $\pi(S)=\{\pi(i):i\in S\}$.

\subsection{The Search Space}\label{sec:searchspace}
Intuitively, a \emph{cancel step} removes certain tour arcs and inserts certain non–tour arcs. We focus on structures in $R$ that \emph{alternate} between these two roles. First, we define a \emph{tour–alternating cycle}.

\begin{definition}[Tour–alternating cycle]\label{def:ta-cycle}
Let $R=(G,T)$ be a tour graph. A directed cycle $A$ in $R$ is \emph{tour–alternating} if for every pair of consecutive arcs in $A$, exactly one is in $E^{-}$ and one is in $E^{+}$.
\end{definition}
Any arc sequence of $A$ alternates between $E^{-}$ and $E^{+}$.
Sending one unit of flow along $A$ decreases tour flow on the (reversed) removal arcs and increases flow on the insertion arcs, preserving in- and outflow at each visited vertex. Note that the definition does \emph{not} assume simplicity of the cycle $A$. Figure \ref{fig:floydunion} shows an example of a non-simple tour-alternating cycle.

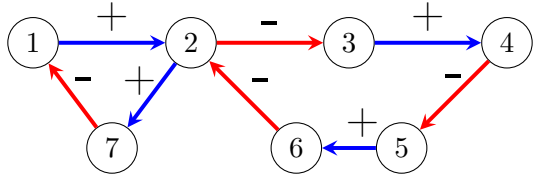
\begin{figure}[h]
\begin{center}
    \begin{tikzpicture}[shorten >=2pt, scale=0.7]
    \path (0,0) node[circle,draw] (x) {   $1$}
    (3,0) node[circle,draw](y) {   $2$}
    (6,0) node[circle,draw](z) {   $3$}
    (9,0) node[circle,draw](w) {   $4$}
    (7,-2) node[circle,draw](t) {   $5$}
    (5,-2) node[circle,draw](g) {   $6$}
    (1.5,-2) node[circle,draw](h) {   $7$};
    
    \draw[line width=1.5pt,blue, font=\bf, decoration={markings,mark=at position 1 with
    {\arrow[scale=1,>=stealth]{>}}}, postaction={decorate}]  (x) -- node[above,black] {\scalebox{1.3}[1.3]+} (y);
    \draw[line width=1.5pt,red, font=\bf, decoration={markings,mark=at position 1 with
    {\arrow[scale=1,>=stealth]{>}}}, postaction={decorate}]  (y) -- node[above,black] {\scalebox{1.8}[1.2]-} (z);
        \draw[line width=1.5pt,blue,font=\bf,  decoration={markings,mark=at position 1 with
    {\arrow[scale=1,>=stealth]{>}}}, postaction={decorate}]  (z) -- node[above,black] {\scalebox{1.3}[1.3]+} (w);
        \draw[line width=1.5pt,red, font=\bf, decoration={markings,mark=at position 1 with
    {\arrow[scale=1,>=stealth]{>}}}, postaction={decorate}]  (w) -- node[above,black] {\scalebox{1.8}[1.2]-} (t);
        \draw[line width=1.5pt,blue,font=\bf,  decoration={markings,mark=at position 1 with
    {\arrow[scale=1,>=stealth]{>}}}, postaction={decorate}]  (t) -- node[above,black,near start] {\scalebox{1.3}[1.3]+} (g);
        \draw[line width=1.5pt,red,font=\bf,  decoration={markings,mark=at position 1 with
    {\arrow[scale=1,>=stealth]{>}}}, postaction={decorate}]  (g) -- node[above=5pt,black, near start] {\scalebox{1.8}[1.2]-} (y);
        \draw[line width=1.5pt,blue,font=\bf,  decoration={markings,mark=at position 1 with
    {\arrow[scale=1,>=stealth]{>}}}, postaction={decorate}]  (y) -- node[above=2pt,black,near end] {\scalebox{1.3}[1.3]+} (h);
        \draw[line width=1.5pt,red,font=\bf,  decoration={markings,mark=at position 1 with
    {\arrow[scale=1,>=stealth]{>}}}, postaction={decorate}]  (h) -- node[above=5pt,black, near start] {\scalebox{1.8}[1.2]-} (x);
 
    \end{tikzpicture}
    
\end{center}

\caption{A non-simple tour-alternating cycle. Insertion arcs are blue, removal arcs are red.}\label{fig:floydunion}
\end{figure}

A \emph{tour–alternating circulation} $C$ is a union of tour–alternating cycles; canceling $C$ sends one unit of flow along each constituent cycle and preserves flow balance globally.

\begin{definition}[Tour–alternating circulation]\label{def:ta-circ}
Let $R=(G,T)$ be a tour graph. A \emph{circulation} $C$ in $R$ is \emph{tour–alternating} if it can be decomposed into tour-alternating cycles. 
\end{definition}

We begin with an illustrative example that showcases a cancel step and motivates tour–alternating structures. Consider the network and tour shown in Figure \ref{fig:10tour}a). It has $10$ vertices $1,\dots,10$. Let $T$ be a tour whose arcs alternate between blue arcs of cost $2$ and black arcs of cost $12$, so that 
\[
c(T)\;=\;5\cdot 2\;+\;5\cdot 12\;=\;70.
\]
Assume the non–tour red arcs $(i,i+5)$ have cost $7$, and all remaining arcs have a prohibitive cost $C\gg 12$ so that any tour using them is strictly worse than $T$. We display opposite directed arc pairs $(i,j)$ and $(j,i)$ as a single undirected edge. 

 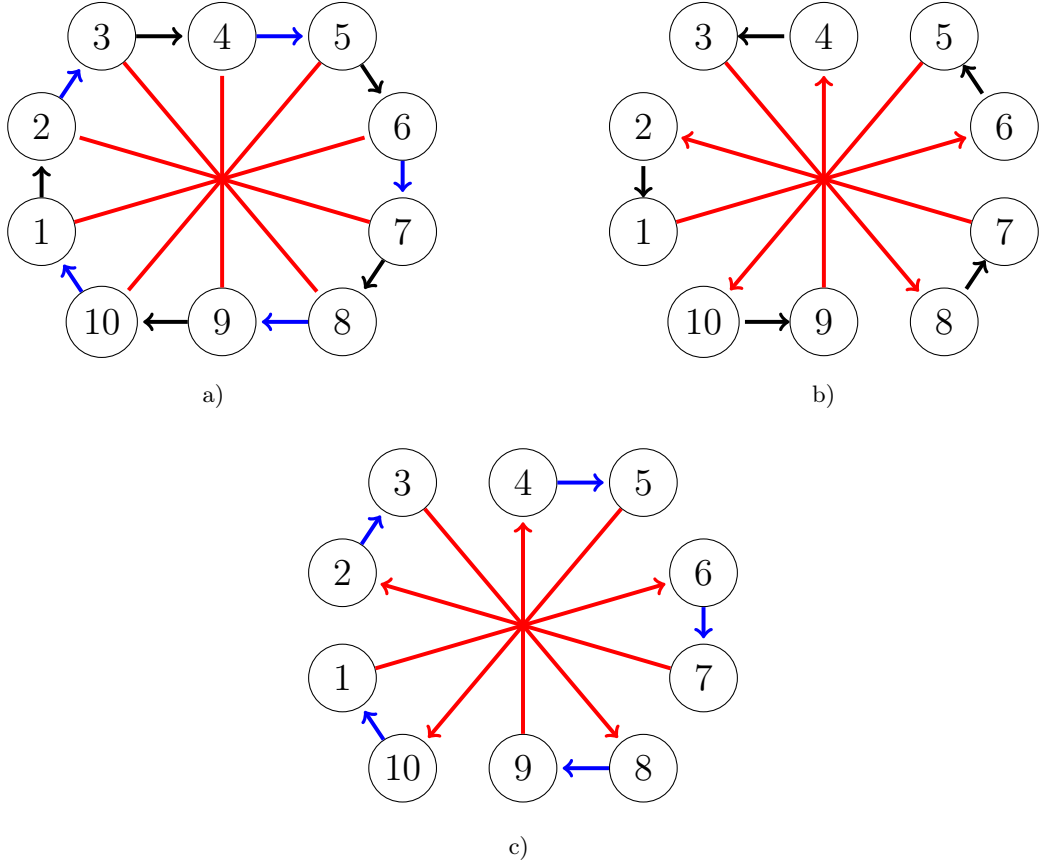
\begin{figure}[h]
\begin{center}
\begin{tikzpicture}[shorten >=2pt]
    \path (0,0) node[circle,draw,minimum size=9mm] (a) {\Large{$1$}}
    (0,1.4) node[circle,draw,minimum size=9mm](b) {\Large{$2$}}
    (0.8,2.6) node[circle,draw,minimum size=9mm](c) {\Large{$3$}}
    (2.4,2.6) node[circle,draw,minimum size=9mm](d) {\Large{$4$}}
    (4,2.6) node[circle,draw,minimum size=9mm](e) {\Large{$5$}}
    (4.8,1.4) node[circle,draw,minimum size=9mm](f) {\Large{$6$}}
    (4.8,0) node[circle,draw,minimum size=9mm](g) {\Large{$7$}}
    (4,-1.2) node[circle,draw,minimum size=9mm](h) {\Large{$8$}}
    (2.4,-1.2) node[circle,draw,minimum size=9mm](i) {\Large{$9$}}
    (0.8,-1.2) node[circle,draw,minimum size=9mm](j) {\Large{$10$}};
    \draw[->, black, line width = 1.5pt] (a) -- (b);
    \draw[->, blue, line width = 1.5pt] (b) -- (c);
    \draw[->, black, line width = 1.5pt] (c) -- (d);
    \draw[->, blue, line width = 1.5pt] (d) -- (e);
    \draw[->, black, line width = 1.5pt] (e) -- (f);
    \draw[->, blue, line width = 1.5pt] (f) -- (g);
    \draw[->, black, line width = 1.5pt] (g) -- (h);
    \draw[->, blue, line width = 1.5pt] (h) -- (i);
    \draw[->, black, line width = 1.5pt] (i) -- (j);
    \draw[->, blue, line width = 1.5pt] (j) -- (a);
    \draw[red, line width = 1.5] (a) -- (f);
    \draw[red, line width = 1.5] (c) -- (h);
    \draw[red, line width = 1.5] (i) -- (d);
    \draw[red, line width = 1.5] (g) -- (b);
    \draw[red, line width = 1.5] (e) -- (j);
    
    \path (8,0) node[circle,draw,minimum size=9mm] (a) {\Large{$1$}}
    (8,1.4) node[circle,draw,minimum size=9mm](b) {\Large{$2$}}
    (8.8,2.6) node[circle,draw,minimum size=9mm](c) {\Large{$3$}}
    (10.4,2.6) node[circle,draw,minimum size=9mm](d) {\Large{$4$}}
    (12,2.6) node[circle,draw,minimum size=9mm](e) {\Large{$5$}}
    (12.8,1.4) node[circle,draw,minimum size=9mm](f) {\Large{$6$}}
    (12.8,0) node[circle,draw,minimum size=9mm](g) {\Large{$7$}}
    (12,-1.2) node[circle,draw,minimum size=9mm](h) {\Large{$8$}}
    (10.4,-1.2) node[circle,draw,minimum size=9mm](i) {\Large{$9$}}
    (8.8,-1.2) node[circle,draw,minimum size=9mm](j) {\Large{$10$}};
    \draw[<-, black, line width = 1.5pt] (a) -- (b);
    \draw[<-, black, line width = 1.5pt] (c) -- (d);
    \draw[<-, black, line width = 1.5pt] (e) -- (f);
    \draw[<-, black, line width = 1.5pt] (g) -- (h);
    \draw[<-, black, line width = 1.5pt] (i) -- (j);
    \draw[->, red, line width = 1.5] (a) -- (f);
    \draw[->, red, line width = 1.5] (c) -- (h);
    \draw[->, red, line width = 1.5] (i) -- (d);
    \draw[->, red, line width = 1.5] (g) -- (b);
    \draw[->, red, line width = 1.5] (e) -- (j);
\end{tikzpicture}
    
    \vspace{0.2cm}
    \footnotesize{a)} \hspace{7.6cm} \footnotesize{b)}
    \vspace{0.5cm}

\begin{tikzpicture}[shorten >=2pt]
    \path (0,0) node[circle,draw,minimum size=9mm] (a) {\Large{$1$}}
    (0,1.4) node[circle,draw,minimum size=9mm](b) {\Large{$2$}}
    (0.8,2.6) node[circle,draw,minimum size=9mm](c) {\Large{$3$}}
    (2.4,2.6) node[circle,draw,minimum size=9mm](d) {\Large{$4$}}
    (4,2.6) node[circle,draw,minimum size=9mm](e) {\Large{$5$}}
    (4.8,1.4) node[circle,draw,minimum size=9mm](f) {\Large{$6$}}
    (4.8,0) node[circle,draw,minimum size=9mm](g) {\Large{$7$}}
    (4,-1.2) node[circle,draw,minimum size=9mm](h) {\Large{$8$}}
    (2.4,-1.2) node[circle,draw,minimum size=9mm](i) {\Large{$9$}}
    (0.8,-1.2) node[circle,draw,minimum size=9mm](j) {\Large{$10$}};
    \draw[->, blue, line width = 1.5pt] (b) -- (c);
    \draw[->, blue, line width = 1.5pt] (d) -- (e);
    \draw[->, blue, line width = 1.5pt] (f) -- (g);
    \draw[->, blue, line width = 1.5pt] (h) -- (i);
    \draw[->, blue, line width = 1.5pt] (j) -- (a);
    \draw[->, red, line width = 1.5] (a) -- (f);
    \draw[->, red, line width = 1.5] (c) -- (h);
    \draw[->, red, line width = 1.5] (i) -- (d);
    \draw[->, red, line width = 1.5] (g) -- (b);
    \draw[->, red, line width = 1.5] (e) -- (j);
    
\end{tikzpicture}

    \vspace{0.3cm}
    \footnotesize{c)}
\vspace*{-0.3cm}
\end{center}
    \caption{An example in which a cycle cancel outperforms $2$-opt and $3$-opt. a) Locally optimal tour for $2$-opt and $3$-opt b) Negative cycle in tour graph c) Improved tour formed by cancel}\label{fig:10tour}
\end{figure}

It is possible to improve on the tour through cancellation of a tour-alternating cycle. Figure \ref{fig:10tour}b) shows a tour-alternating cycle in the tour graph, alternating between black removal arcs and red insertion arcs. The cancel leads to the tour displayed in Figure \ref{fig:10tour}c), which retains the blue arcs from the original tour and replaces all black arcs with red ones. The cost of this improved tour $T'$ is \[
c(T')\;=\;5\cdot 2\;+\;5\cdot 7\;=\;45.
\]

There are a few noteworthy properties to this example. First, observe that we obtained a tour $T'$ directly from the cancel. Second, we are interested in canceling an improving tour-alternating cycle or circulation specifically when heuristics like $2$-opt or $3$-opt get stuck in a local optimum, and this is the case in this example:

Consider a possible update through $2$-opt. A $2$–opt move removes two tour arcs and reconnects to form a tour. Let $(i,i+1)$ be one of the removed arcs. Then avoiding a $C$–cost arc forces the second removed arc to be $(i+5,i+6)$. In this case, one blue and one black arc are replaced by two red arcs, and the net change is
\[
\Delta c\;=\;(2+12)\;-\;(7+7)\;=\;14-14\;=\;0,
\]
so no improvement is obtained.
All other choices pick up at least one $C$–cost arc and are strictly worse. Hence $T$ is locally optimal for $2$–opt.

Similarly, a $3$–opt move removes three tour arcs and reconnects to form a tour. Let $(i,i+1)$ again be one of the removed arcs. As before, avoiding a $C$–cost arc forces a second removal $(i+5,i+6)$. Recall that the set of red arcs consists of arcs $(j,j+5)$ for all $j$. Thus, any choice of the third removed arc necessarily introduces a $C$–cost arc in the reconnection, so no improving $3$–opt move exists. Thus $T$ is locally optimal for $3$–opt, as well.

In this example, the cancel step was able to directly improve a tour that is locally optimal (stuck) with respect to $2$-opt and $3$-opt. We note that, in general (and as we will prove), a cancellation of a tour-alternating structure gives a collection of subtours, and so our heuristic includes a patching routine to reconnect these. Further, the tour-alternating cycle in Figure \ref{fig:10tour}b) removes $5$ arcs and reconnects the $5$ remaining segments of the original tour; it is one of the candidates that would be considered in a run of $5$-opt (or $5^*$-opt; c.f., Section \ref{sec:tailoredclassic}). In practice, $k$-opt is not commonly run for values higher than $3$ due to the prohibitive $\Omega(n^k)$ running time. A cancel step does not restrict the number of arcs in a cycle or circulation to be canceled, and so provides access to a subset of the possible changes corresponding to a hypothetical run of $k$-opt for high values of $k$. Additionally, a cancel step also considers options that disconnect a tour into a collection of subtours, which then have to be patched together again; see Section \ref{sec:tailoredclassic} for a description of our simple patching routine. In summary, neither of the search spaces of tour-alternating cycles or $k$-opt is contained in one another. This gives some potential for exiting a local optimum for the popular $k$-opt algorithms through a cancel step. In Section \ref{sec:experiments}, we will confirm the success of this strategy in practice. 

Let us record some structural results on the cancel step. First, we combine the preceding observations into a formal statement that characterizes which single–cycle updates can produce a tour and what tour–alternation guarantees after a unit–flow cancel. Specifically, we prove that it is the only class of flow updates guaranteed to preserve local balance after a unit–flow cancel.

\begin{theorem}\label{thm:single-cycle}
Let $R=(G,T)$ be a tour graph and let $A$ be a directed cycle in $R$. Then
\begin{enumerate}
\item If the cancellation of a \emph{simple} cycle $A$ produces a tour $T'\neq T$, then $A$ is a tour–alternating cycle.
\item If $A$ is tour–alternating, then the cancellation of $A$ yields a feasible $0,1$-circulation that is either (i) a tour or (ii) a collection of subtours.
\end{enumerate}
\end{theorem}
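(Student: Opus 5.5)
The plan is to compute, vertex by vertex, how a unit-flow cancel along $A$ changes the in- and out-degrees of the flow $x$ induced by $T$. The new flow is $x'=x+\chi(A)$, where insertion arcs $(h,k)\in E^+$ set $x'_{h,k}=1$ and removal arcs $(\pi(i),i)\in E^-$ set $x'_{i,\pi(i)}=0$. Since all residual arcs have unit capacity, I interpret a (possibly non-simple) directed cycle $A$ as traversing each arc of $E_R$ at most once. Under that convention $x'$ is automatically a $0,1$-vector.

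The key local observation concerns a vertex $v$ and its two incident removal arcs. The arc $(\pi(v),v)$ enters $v$ and deletes the tour arc $(v,\pi(v))$, so it lowers the \emph{out}-degree of $v$. The arc $(v,\pi^{-1}(v))$ leaves $v$ and deletes $(\pi^{-1}(v),v)$, so it lowers the \emph{in}-degree of $v$. An insertion arc entering $v$ raises the in-degree of $v$, and one leaving $v$ raises its out-degree. I would record these four effects explicitly, since both parts follow from them.

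\textbf{Part 1.} Let $A$ be simple with $x'$ a tour $T'\neq T$, and let $v$ be a vertex of $A$ with entering arc $a$ and leaving arc $b$ in $A$. Because $A$ is simple, $a$ and $b$ are the only arcs of $A$ at $v$. I split into the cases where the two arcs have the same type.
\begin{itemize}
\item If both are insertion arcs, then $v$ keeps both of its tour arcs and gains one new in-arc and one new out-arc. Its in- and out-degrees in $x'$ become $2$, contradicting that $T'$ is a tour.
\item If both are removal arcs, then $a=(\pi(v),v)$ and $b=(v,\pi^{-1}(v))$. Both tour arcs at $v$ are deleted and nothing is added, so $v$ becomes isolated. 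This again contradicts that $T'$ is a tour.
\end{itemize}
Hence consecutive arcs of $A$ always have different types, which is exactly Definition~\ref{def:ta-cycle}. The hypothesis $T'\neq T$ only serves to ensure $A$ is a genuine nonempty cycle.

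\textbf{Part 2.} Now let $A$ be tour-alternating, possibly non-simple. Each passage of $A$ through $v$ consists of an entering arc and a leaving arc of opposite types, giving two possibilities:
\begin{itemize}
\item removal $(\pi(v),v)$ followed by an insertion out of $v$, which replaces the out-arc of $v$;
\item an insertion into $v$ followed by removal $(v,\pi^{-1}(v))$, which replaces the in-arc of $v$.
\end{itemize}
Each removal arc can be used at most once, so $v$ is passed at most twice, and at most once of each kind. Therefore every vertex ends with in-degree and out-degree exactly $1$ in $x'$.

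It remains to check that $x'$ is a feasible $0,1$-circulation. Inserted arcs had $x=0$ and are used once, so they end at $1$. Removed arcs end at $0$. Flow balance holds because $A$ is a cycle.

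A $0,1$-flow with all in- and out-degrees equal to $1$ on a loopfree graph is the arc set of a fixed-point-free permutation. It therefore decomposes into vertex-disjoint directed cycles covering $V$. If there is a single cycle, $x'$ is a tour; otherwise it is a spanning collection of subtours. Note that $2$-cycles formed by an opposite insertion pair are allowed and still count as subtours.

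The main obstacle I expect is the non-simple case in Part 2: one has to argue carefully that repeated visits to a vertex cannot double-count a degree change. I would handle this via the unit-capacity and arc-once convention together with the observation that each vertex has exactly two incident removal arcs, one for each kind of passage.
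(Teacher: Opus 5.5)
Your proposal is correct and follows the paper's proof essentially step for step. Part 1 uses the same two-case contradiction at each vertex of a simple cycle. Part 2 uses the same per-visit accounting (each passage pairs one insertion arc with one removal arc, leaving in- and out-degree unchanged), followed by decomposition into vertex-disjoint cycles. Your explicit convention that each residual arc is traversed at most once, and your observation that each vertex admits at most one passage of each kind, make precise what the paper leaves implicit when it asserts that $x'$ is a $0,1$-vector.
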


\begin{proof}
Assume the cancellation of a \emph{simple} directed cycle $A$ in the tour graph $R=(G,T)$ produces a tour $T'\neq T$. Let $v$ be any vertex visited by $A$. Because $A$ is simple, exactly two arcs of $A$ are incident to $v$: one entering $v$ and one leaving $v$. In $R$, each arc is either a removal arc in $E^{-}$ (the reversal of a tour arc) or an insertion arc in $E^{+}$ (a non–tour arc).

If both incident arcs at $v$ are removal arcs, then both tour arcs at $v$ are deleted and none is added; $v$ becomes isolated, so the cancellation cannot yield a tour. If both incident arcs are insertion arcs, then the cancellation adds a new incoming non–tour arc and a new outgoing non–tour arc at $v$ without deleting the original tour arcs; $v$ would have in-degree $2$ and out-degree $2$, again contradicting that the result is a tour. Hence, at every visited vertex $v$, the two incident arcs of $A$ must comprise exactly one removal and exactly one insertion arc. This shows that consecutive arcs of $A$ alternate between $E^{-}$ and $E^{+}$, i.e., $A$ is a tour–alternating cycle, which proves the first statement.

Suppose now that $A$ is tour–alternating (and not necessarily simple). Let $x$ denote the $0,1$–flow of the tour $T$. The post–cancel flow $x'$ can be written as
\[
x' \;=\; x \;+\; \chi_{A\cap E^{+}} \;-\; \chi_{A\cap E^{-}},
\]
where $\chi_{E'}$ is the arc–incidence vector of arc set $E'$. Fix a vertex $v$ visited by $A$. Each visit of $A$ to $v$ consists of two consecutive arcs of $A$ incident to $v$, one entering and one leaving $v$; by tour–alternation, exactly one of these arcs is an insertion arc and one is a removal arcs. Flow balance at $v$ remains unchanged by this pair of arcs. Summing over all visits to $v$ yields zero net change in inflow and outflow at $v$, and vertices not visited by $A$ are unchanged. 

Hence $x'$ is balanced at every vertex, i.e., a feasible $0,1$–circulation, with inflow and outflow at each vertex equal to one. 
Such a $0,1$–circulation greedily decomposes into a spanning collection of vertex-disjoint directed cycles. Therefore the cancellation of $A$ produces either a single tour or a collection of subtours, as claimed.
\end{proof}

Note that the first part of Theorem \ref{thm:single-cycle} is stated for \emph{simple} cycles. For a non-simple cycle in $R$, per–vertex adjustments to flow balance may compensate across multiple visits and the cycle itself need not alternate. However, it is easy to see that, whenever the cancellation of a non–simple cycle $A$ produces a new tour, $A$ admits a decomposition into tour–alternating cycles, and thus it is a tour-alternating circulation; the same argument holds for a general $0,1$-circulation $A$.

\begin{theorem}\label{thm:alt-decomp}
Let $R=(G,T)$ be a tour graph and let $A$ be a directed non–simple cycle or a $0,1$-circulation in $R$ whose cancellation produces a tour $T'\neq T$. Then $A$ is a tour-alternating circulation.
\end{theorem}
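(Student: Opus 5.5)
We argue at the level of the arc set of $A$ and then rebuild $A$ from alternating pieces. Since $A$ is a $0,1$-circulation in $R$ (a non-simple cycle that produces a tour must use each residual arc at most once, as all residual capacities are one), we may identify it with its arc set $A\subseteq E_R$. Write $A^-=A\cap E^-$ and $A^+=A\cap E^+$. Then the post-cancel flow is
\[
x'=x+\chi_{A^+}-\chi_{A^-},
\]
exactly as in the proof of Theorem~\ref{thm:single-cycle}.

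\textbf{Step 1: a local degree count.} We use the hypothesis that $x'$ is a tour $T'$, so every vertex has in- and out-degree one under $x'$. Fix $v\in V$. The tour arc leaving $v$ is $(v,\pi(v))$; it is removed precisely when the removal arc $(\pi(v),v)\in A^-$, which is an arc of $A$ \emph{entering} $v$. Since $x'$ has out-degree one at $v$, we get
\[
|\{\text{arcs of }A^+\text{ leaving }v\}| = [(\pi(v),v)\in A^-]\in\{0,1\}.
\]
Symmetrically, the tour arc entering $v$ is removed precisely when $(v,\pi^{-1}(v))\in A^-$, an arc of $A$ \emph{leaving} $v$. Hence
\[
|\{\text{arcs of }A^+\text{ entering }v\}| = |\{\text{arcs of }A^-\text{ leaving }v\}|\le 1.
\]
In words, at every vertex the $A^-$-arcs entering $v$ are matched one-to-one with the $A^+$-arcs leaving $v$, and the $A^+$-arcs entering $v$ are matched one-to-one with the $A^-$-arcs leaving $v$. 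Each of these numbers is $0$ or $1$.

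\textbf{Step 2: build alternating cycles.} Starting from any arc of $A$, we construct a successor rule. After an $A^-$-arc entering $v$, we take the unique $A^+$-arc leaving $v$. After an $A^+$-arc entering $v$, we take the unique $A^-$-arc leaving $v$. Step 1 guarantees that such a successor exists and is unique. It also guarantees that every arc has a unique predecessor of the opposite type. The rule is therefore a permutation of the finite arc set $A$, and its orbits are closed walks whose consecutive arcs alternate between $E^-$ and $E^+$. These orbits partition $A$ into arc-disjoint tour-alternating cycles in the sense of Definition~\ref{def:ta-cycle}, which need not be simple. By Definition~\ref{def:ta-circ}, $A$ is then a tour-alternating circulation.

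\textbf{Main obstacle.} The delicate point is Step 1. We must check that no other combination can balance $x'$. One might worry that an insertion arc entering $v$ could be compensated by an insertion arc leaving $v$ rather than by a removal. This does not happen, because the degree constraint is imposed separately on out-degree and in-degree, not merely on their difference.

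We must also rule out two degenerate situations. The first is that $A$ might contain both $(h,k)$ and $(k,h)$ from $E^+$. This would be harmless for the construction, but it would create a $2$-cycle in $T'$, which is impossible when $n\ge 3$. The second is that a "cycle" $A$ might traverse an arc twice; this is excluded by the unit capacities.

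The hypothesis $T'\neq T$ only ensures that $A$ is nonempty. The construction itself uses nothing beyond the fact that $x'$ is a $0,1$-circulation with all degrees equal to one. The argument therefore also shows that the conclusion holds when the cancellation yields a spanning collection of subtours.
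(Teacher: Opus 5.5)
Your proof is correct and takes essentially the same route as the paper. At each vertex you pair the removal arc entering $v$ with the insertion arc leaving $v$, and the insertion arc entering $v$ with the removal arc leaving $v$, then follow these pairings to split $A$ into arc-disjoint alternating closed walks. Deriving the pairing from the separate in- and out-degree conditions on $x'$, and phrasing the walk-following as a permutation of the arc set, is a slightly cleaner version of the paper's case split on whether $v$ is visited once or twice and its greedy extraction.
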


\begin{proof}
Since cancellation of $A$ yields a tour $T'$, at each vertex $v$ visited by $A$ the number of insertion arcs of $A$ incident to $v$ equals the number of removal arcs of $A$ incident to $v$.
In $R$, each vertex $v$ is incident to exactly two removal arcs -- namely, $(\pi(v),v)$ and $(v,\pi^{-1}(v))$ -- so $A$ visits $v$ at most twice.
If $A$ visits $v$ once, it uses one removal arc and one insertion arc at $v$.
If $A$ visits $v$ twice, it uses both removal arcs together with two insertion arcs at $v$ -- one entering and one leaving $v$.

We pair incident arcs at $v$ so that alternation and orientation are respected. If $A$ visits $v$ once, pair the unique removal arc in $A$ at $v$ with the unique insertion arc in $A$ at $v$. If $A$ visits $v$ twice, pair the removal arc $(\pi(v),v)$ with the insertion arc leaving $v$ and pair the removal arc $(v,\pi^{-1}(v))$ with the insertion arc entering $v$. These pairings 
consume all arcs of $A$ incident to $v$.

Starting from any arc of $A$ and following the directed order, extend the walk by taking, at each intermediate vertex, the paired arc (which is of opposite type). Because each vertex supplies a unique paired continuation, this construction closes to a directed (possibly non-simple) cycle $A_i$. By construction, consecutive arcs in $A_i$ alternate between insertion arcs and removal arcs at every step. We remove its arcs and repeat the procedure. This greedy extraction yields a collection of arc–disjoint tour–alternating cycles $A_1,\dots,A_q$ whose union is the arc set of $A$. Hence $A$ is a tour–alternating circulation, as claimed. 
\end{proof}

In summary, Theorems~\ref{thm:single-cycle} and~\ref{thm:alt-decomp} ensure that only a cancel step using a tour-alternating cycle or circulation can result in a new tour. Additionally, cancellation along a tour-alternating cycle (simple or not) yields a tour or a collection of subtours, so that a patching step may be the only post–processing required. It remains to consider the outcome of a cancellation of a tour-alternating circulation.

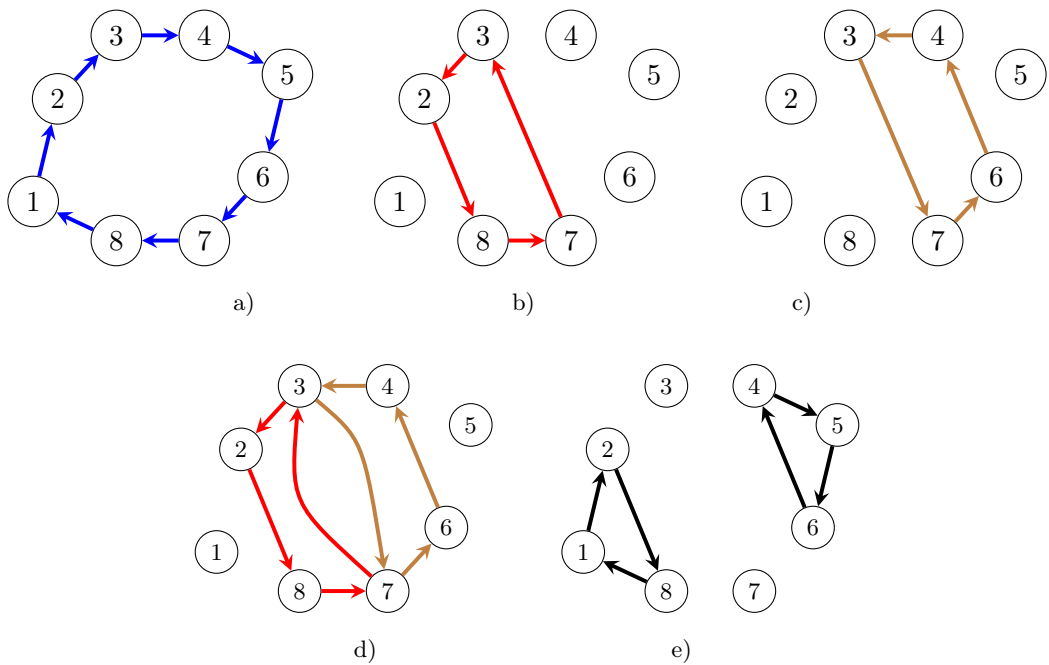
\begin{figure}[h]
\begin{center}
    \begin{tikzpicture}[shorten >=2pt, scale=0.65]
    \path (-0.5,-0.5) node[circle,,draw] (x) {   1}
    (0,1.6) node[circle,,draw] (k) {   2}
    (1.2,2.9) node[circle,draw](y) {    $3$}
    (1.2,-1.3) node[circle,draw](z) {    $8$}
    (3,2.9) node[circle,draw](w) {    $4$}
    (3,-1.3) node[circle,draw](t) {    $7$}
    (4.2,0) node[circle,draw](g) {    $6$}
    (4.7,2.1) node[circle,draw](m) {    $5$};
    
    \draw[line width=1.5pt,blue, decoration={markings,mark=at position 1 with
    {\arrow[scale=1,>=stealth]{>}}}, postaction={decorate}] (x) -- node[above,sloped] {} (k);
    \draw[line width=1.5pt,blue, decoration={markings,mark=at position 1 with
    {\arrow[scale=1,>=stealth]{>}}}, postaction={decorate}] (y) -- node[above,sloped] {} (w);
    \draw[line width=1.5pt,blue, decoration={markings,mark=at position 1 with
    {\arrow[scale=1,>=stealth]{>}}}, postaction={decorate}] (w) -- node[above,sloped] {} (m);
    \draw[line width=1.5pt,blue, decoration={markings,mark=at position 1 with
    {\arrow[scale=1,>=stealth]{>}}}, postaction={decorate}] (m) -- node[above,sloped] {} (g);
    \draw[line width=1.5pt,blue, decoration={markings,mark=at position 1 with
    {\arrow[scale=1,>=stealth]{>}}}, postaction={decorate}] (g) -- node[above,sloped] {} (t);
    \draw[line width=1.5pt,blue, decoration={markings,mark=at position 1 with
    {\arrow[scale=1,>=stealth]{>}}}, postaction={decorate}] (t) -- node[above,sloped] {} (z);
    \draw[line width=1.5pt,blue, decoration={markings,mark=at position 1 with
    {\arrow[scale=1,>=stealth]{>}}}, postaction={decorate}] (z) -- node[above,sloped] {} (x);
    \draw[line width=1.5pt,blue, decoration={markings,mark=at position 1 with
    {\arrow[scale=1,>=stealth]{>}}}, postaction={decorate}] (k) -- node[above,sloped] {} (y);
    
    \path (7,-0.5) node[circle,,draw] (x) {   $1$}
    (7.5,1.6) node[circle,,draw] (k) {   $2$}
    (8.7,2.9) node[circle,draw](y) {   $3$}
    (8.7,-1.3) node[circle,draw](z) {   $8$}
    (10.5,2.9) node[circle,draw](w) {   $4$}
    (10.5,-1.3) node[circle,draw](t) {   $7$}
    (11.7,0) node[circle,draw](g) {   $6$}
    (12.2,2.1) node[circle,draw](m) {   $5$};
    \draw[line width=1.5pt,red, decoration={markings,mark=at position 1 with
    {\arrow[scale=1,>=stealth]{>}}}, postaction={decorate}] (k) -- node[above,sloped] {} (z);
    \draw[line width=1.5pt,red, decoration={markings,mark=at position 1 with
    {\arrow[scale=1,>=stealth]{>}}}, postaction={decorate}] (y) -- node[above,sloped] {} (k);
    \draw[line width=1.5pt,red, decoration={markings,mark=at position 1 with
    {\arrow[scale=1,>=stealth]{>}}}, postaction={decorate}] (z) -- node[above,sloped] {} (t);
    \draw[line width=1.5pt,red, decoration={markings,mark=at position 1 with
    {\arrow[scale=1,>=stealth]{>}}}, postaction={decorate}] (t) -- node[above,sloped] {} (y);

    \path (14.5,-0.5) node[circle,,draw] (x) {   $1$}
    (15,1.6) node[circle,,draw] (k) {   $2$}
    (16.2,2.9) node[circle,draw](y) {   $3$}
    (16.2,-1.3) node[circle,draw](z) {   $8$}
    (18,2.9) node[circle,draw](w) {   $4$}
    (18,-1.3) node[circle,draw](t) {   $7$}
    (19.2,0) node[circle,draw](g) {   $6$}
    (19.7,2.1) node[circle,draw](m) {   $5$};

    \draw[line width=1.5pt,brown, decoration={markings,mark=at position 1 with
    {\arrow[scale=1,>=stealth]{>}}}, postaction={decorate}] (w) -- node[above,sloped] {} (y);
    \draw[line width=1.5pt,brown, decoration={markings,mark=at position 1 with
    {\arrow[scale=1,>=stealth]{>}}}, postaction={decorate}] (y) -- node[above,sloped] {} (t);
    \draw[line width=1.5pt,brown, 
    decoration={markings,mark=at position 1 with
    {\arrow[scale=1,>=stealth]{>}}}, postaction={decorate}] (t) -- node[above,sloped] {} (g);
    \draw[line width=1.5pt,brown, decoration={markings,mark=at position 1 with
    {\arrow[scale=1,>=stealth]{>}}}, postaction={decorate}] (g) -- node[above,sloped] {} (w);
    \end{tikzpicture}

    \vspace{0.2cm}
    \footnotesize{a)} \hspace{3.2cm} \footnotesize{b)} \hspace{3.2cm} \footnotesize{c)}
    \vspace{0.6cm}

    \begin{tikzpicture}[shorten >=2pt, scale=0.65]
    \path (-0.5,-0.5) node[circle,,draw] (x) {   $1$}
    (0,1.6) node[circle,,draw] (k) {   $2$}
    (1.2,2.9) node[circle,draw](y) {   $3$}
    (1.2,-1.3) node[circle,draw](z) {   $8$}
    (3,2.9) node[circle,draw](w) {   $4$}
    (3,-1.3) node[circle,draw](t) {   $7$}
    (4.2,0) node[circle,draw](g) {   $6$}
    (4.7,2.1) node[circle,draw](m) {   $5$};

    \draw[line width=1.5pt,red, decoration={markings,mark=at position 1 with
    {\arrow[scale=1,>=stealth]{>}}}, postaction={decorate}] (k) -- node[above,sloped] {} (z);
    \draw[line width=1.5pt,red, decoration={markings,mark=at position 1 with
    {\arrow[scale=1,>=stealth]{>}}}, postaction={decorate}] (y) -- node[above,sloped] {} (k);
    \draw[line width=1.5pt,red, decoration={markings,mark=at position 1 with
    {\arrow[scale=1,>=stealth]{>}}}, postaction={decorate}] (z) -- node[above,sloped] {} (t);
    \draw[line width=1.5pt,brown, decoration={markings,mark=at position 1 with
    {\arrow[scale=1,>=stealth]{>}}}, postaction={decorate}] (w) -- node[above,sloped] {} (y);
    \draw[line width=1.5pt,brown, decoration={markings,mark=at position 1 with
    {\arrow[scale=1,>=stealth]{>}}}, postaction={decorate}] (y)  .. controls (2.5,1.8) .. (t);

    \draw[line width=1.5pt,red, decoration={markings,mark=at position 1 with
    {\arrow[scale=1,>=stealth]{>}}}, postaction={decorate}] (t) .. controls (1,0.5) .. (y);
    \draw[line width=1.5pt,brown, decoration={markings,mark=at position 1 with
    {\arrow[scale=1,>=stealth]{>}}}, postaction={decorate}] (t) -- node[above,sloped] {} (g);
    \draw[line width=1.5pt,brown, decoration={markings,mark=at position 1 with
    {\arrow[scale=1,>=stealth]{>}}}, postaction={decorate}] (g) -- node[above,sloped] {} (w);
    
    \path (7,-0.5) node[circle,,draw] (x) {   $1$}
    (7.5,1.6) node[circle,,draw] (k) {   $2$}
    (8.7,2.9) node[circle,draw](y) {   $3$}
    (8.7,-1.3) node[circle,draw](z) {   $8$}
    (10.5,2.9) node[circle,draw](w) {   $4$}
    (10.5,-1.3) node[circle,draw](t) {   $7$}
    (11.7,0) node[circle,draw](g) {   $6$}
    (12.2,2.1) node[circle,draw](m) {   $5$};
    
    \draw[line width=1.5pt,black, decoration={markings,mark=at position 1 with
    {\arrow[scale=1,>=stealth]{>}}}, postaction={decorate}] (x) -- node[above,sloped] {} (k);
    \draw[line width=1.5pt,black, decoration={markings,mark=at position 1 with
    {\arrow[scale=1,>=stealth]{>}}}, postaction={decorate}] (k) -- node[above,sloped] {} (z);
    \draw[line width=1.5pt,black, decoration={markings,mark=at position 1 with
    {\arrow[scale=1,>=stealth]{>}}}, postaction={decorate}] (z) -- node[above,sloped] {} (x);
    \draw[line width=1.5pt,black, decoration={markings,mark=at position 1 with
    {\arrow[scale=1,>=stealth]{>}}}, postaction={decorate}] (g) -- node[above,sloped] {} (w);
    \draw[line width=1.5pt,black, decoration={markings,mark=at position 1 with
    {\arrow[scale=1,>=stealth]{>}}}, postaction={decorate}] (w) -- node[above,sloped] {} (m);
    \draw[line width=1.5pt,black, decoration={markings,mark=at position 1 with
    {\arrow[scale=1,>=stealth]{>}}}, postaction={decorate}] (m) -- node[above,sloped] {} (g);

    \end{tikzpicture} 
    
    \vspace{0.2cm}
    \footnotesize{d)} \hspace{3.7cm} \footnotesize{e)}
    \vspace*{-0.3cm}
    \end{center}
    
    \caption{An example with opposite non-tour arcs. a) Initial tour $T$ b) Tour-alternating cycle $A$ c) Tour-alternating cycle $A'$ d) Tour-alternating circulation $C$ composed of $A$ and $A'$ e) Flow formed by $C$ with a trim of opposite non-tour arcs \label{fig:circu1}}
\end{figure}

Most of the previous arguments transfer to this setting, but there is an additional technical challenge. Figure~\ref{fig:circu1} illustrates an example where a tour–alternating circulation makes use of opposite non–tour arcs between a pair of vertices. Figure \ref{fig:circu1}a) shows an initial tour $T$; Figures  \ref{fig:circu1}b) and c) show two tour-alternating cycles $A,A'$ in $R$; and Figure \ref{fig:circu1}d) shows their union, a tour-alternating circulation $C$. The opposite non-tour arcs $(7,3)$ and $(3,7)$ lie in $A$ and $A'$, respectively, and thus in $C$. The effect is that both pairs of removal arcs incident to vertices $3$ and $7$ are used.

There are two viable ways to deal with this situation: (i) Either one allows opposite non–tour arcs $(u,v)$ and $(v,u)$ to both be placed in the post–cancel flow; then they form a subtour by themselves. (ii) Or one treats the pair as an undesirable $2$–cycle -- it lies fully outside the original tour and is of positive cost on both arcs -- and trims it from the support. We choose this latter approach. We delete both insertion arcs $(u,v)$ and $(v,u)$, which preserves local balance at $u$ and $v$ (one incoming and one outgoing arc are removed at each) and leaves the remaining support as a collection of subtours together with any vertices that may have become \emph{isolated}. The result is depicted in Figure \ref{fig:circu1}e). In Section~\ref{sec:tailoredclassic}, we tailor the classic patching idea to also reconnect isolated vertices to proper subtours.

\begin{corollary}\label{cor:single-cycle}
Let $R=(G,T)$ be a tour graph and let $C$ be a tour-alternating circulation in $R$. Then the cancellation of $C$ (with trimming of opposite non-tour arcs) yields a feasible $0,1$-circulation that is either (i) a tour, (ii) a collection of subtours, or (iii) a 
collection of subtours together with pairs of isolated vertices for each trim of opposite non-tour arcs.
\end{corollary}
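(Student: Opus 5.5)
Write $C=A_1\cup\dots\cup A_q$ as a union of tour-alternating cycles (Definition~\ref{def:ta-circ}), and let $x$ be the flow of $T$. I will first treat the untrimmed cancellation $x' = x + \chi_{C\cap E^+} - \chi_{C\cap E^-}$ and then account for the trims. As in the proof of Theorem~\ref{thm:single-cycle}, part 2, I pair the two arcs of each visit of each $A_i$ to a vertex $v$. Every such pair is one entering and one leaving arc, one of them an insertion arc and one a removal arc. Summing over all visits of all $A_i$ gives zero net change in inflow and outflow at every vertex, so $x'$ is balanced.

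The first obstacle is that $x'$ must actually be a $0,1$ vector. The circulation $C$ has unit capacities in $R$, so each residual arc is used at most once. Each removal arc $(\pi(i),i)$ therefore lowers $x_{i,\pi(i)}$ from $1$ to $0$ at most once, and each insertion arc raises a $0$ entry to $1$ at most once. Hence $x'\in\{0,1\}^E$. Combining this with balance and with the fact that $v$ has exactly one tour in-arc and one tour out-arc, the in-degree of every $v$ in $x'$ equals its out-degree and is at most $2$.

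Degree $2$ occurs only when $C$ uses both removal arcs at $v$ and two insertion arcs, one entering and one leaving $v$. Here I use the standing assumption behind the trimming rule: the only way a cancellation places two non-tour arcs into and out of a vertex without a compensating partner is through an opposite pair $(u,v),(v,u)$, as in Figure~\ref{fig:circu1}. I would argue this by the pairing of visits: a doubly visited vertex whose two insertion arcs are not opposite would have to be resolved by following the alternating cycles.

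This is the step I expect to be hardest, since in full generality a circulation could double-visit a vertex with non-opposite insertion arcs. I would try to restrict the corollary's scope to the trimmed configuration, so that after trimming all opposite pairs every vertex has in- and out-degree at most one. Trimming an opposite pair $(u,v),(v,u)$ removes one in-arc and one out-arc at each of $u$ and $v$, so balance is preserved. Once the degrees are at most one, every vertex has equal in- and out-degree in $\{0,1\}$.

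The final step is the decomposition. Vertices of degree $1$ decompose greedily into vertex-disjoint directed cycles, as in Theorem~\ref{thm:single-cycle}. These cycles form one Hamiltonian cycle (case (i)) or several subtours (case (ii)). Vertices of degree $0$ are the isolated ones. Such a vertex had both tour arcs removed, so it had degree $2$ before trimming, which forces it to be an endpoint of a trimmed opposite pair. Both endpoints of the pair lose both insertion arcs, so isolated vertices come in pairs, one pair per trim, which gives case (iii).
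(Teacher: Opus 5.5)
There is a genuine gap, and it comes from a miscount at doubly visited vertices. You claim that degree $2$ occurs when $C$ uses both removal arcs at $v$ together with one entering and one leaving insertion arc. In that configuration, however, both tour arcs $(\pi^{-1}(v),v)$ and $(v,\pi(v))$ are deleted and exactly one arc is added in each direction. So $v$ ends with in- and out-degree exactly $1$; in fact degree $2$ never occurs. This false premise is what makes you invoke an unproven ``standing assumption'' and propose narrowing the corollary to a ``trimmed configuration''. That would prove only a conditional version of the statement. It also invalidates your last step, since an isolated vertex did \emph{not} have degree $2$ before trimming. Your worry that a vertex can be double-visited through non-opposite insertion arcs is justified (vertex $2$ in Figure~\ref{fig:floydunion} is an example), but such visits are harmless.

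The missing idea is that each visit preserves in-degree and out-degree \emph{separately}, not merely balance.
\begin{itemize}
\item The only removal arc entering $v$ in $R$ is $(\pi(v),v)$, which deletes the outgoing tour arc of $v$. By alternation it is followed by an insertion arc leaving $v$.
\item The only removal arc leaving $v$ is $(v,\pi^{-1}(v))$, which deletes the incoming tour arc of $v$. By alternation it is preceded by an insertion arc entering $v$.
\end{itemize}
Hence every visit swaps an out-arc for an out-arc, or an in-arc for an in-arc. With unit capacities there are at most two visits per vertex, at most one of each type. Your first paragraph actually states this (``zero net change in inflow and outflow''), but you then weaken it to mere balance. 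Used as stated, it gives in- and out-degree $1$ at every vertex of the untrimmed $x'$, so $x'$ is a tour or a collection of subtours, exactly as in the paper's argument. Any opposite pair $(u,v),(v,u)$ in $x'$ then carries all the arcs at $u$ and $v$, so it is a $2$-cycle subtour. Trimming it isolates precisely $u$ and $v$, and distinct trims involve disjoint pairs. This yields case (iii) with no restriction of scope.
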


\begin{proof}
Recall that $C$ decomposes into (possibly non-simple) arc-disjoint tour-alternating cycles $A_1,\dots,A_q$. By the same arguments as in the proof of Theorem~\ref{thm:single-cycle} for individual cycles, at every vertex visited by a cycle $A_i$, each visit contributes one insertion arc and one removal arc, maintaining an inflow and outflow of one. Due to arc-disjointness of $A_1\dots,A_q$, the post-cancel flow is a feasible (and spanning) $0,1$–circulation with inflow and outflow of one at all vertices, and thus a tour or a collection of subtours. 

When opposite non–tour arcs $(u,v)$ and $(v,u)$ occur, trimming this directed $2$–cycle deletes one incoming and one outgoing insertion arc at both $u$ and $v$. This preserves flow balance, but leaves $u$ and $v$ as isolated vertices. 
\end{proof}

In all situations, a cancel step along a tour–alternating cycle or circulation leaves a feasible $0,1$–circulation that stays close to a tour: either a tour outright, a (small) collection of subtours, or a collection of subtours with a few isolated vertices ready to be absorbed. Moreover, whenever a cancel step with a single, simple cycle produces a new tour, the cycle must be tour–alternating; and if a cancel step with a non–simple cycle or a circulation produces a tour, its effect decomposes into arc–disjoint tour–alternating cycles. Together, these facts justify restricting the search space to tour–alternating structures: One searches for an improvement in cost while remaining in the proximity of a tour, and any remaining work is confined to a lightweight patching step. In the following section, we devise a strategy to efficiently find an improving tour-alternating cycle or circulation.

\subsection{Computational Strategy}\label{sec:compstrategy}

We detect tour–alternating structures by working on a bipartite \emph{separated graph} that encodes, by construction, the alternation between removal and insertion arcs of the tour graph $R=(G,T)$. We begin with the formal construction.

\begin{definition}[Separated graph]\label{def:separated}
Let $R=(G,T)$ be a tour graph on $V=\{1,\dots,n\}$ with residual arc sets $E^{+}$ and $E^{-}$. The \emph{separated graph} is the directed bipartite graph
\[
R'=(A\cup B,\;E',\;c^{R'}),
\quad
A=\{1,\dots,n\},\ \ B=\{1',\dots,n'\},
\]
with arc set 
\[
E'=\{(h,\,k'):\ (h,k)\in E^{+}\}\ \cup\ \{(\pi(i)',\,i):\ (\pi(i),i)\in E^{-}\}
\]
and costs
\[
c^{R'}_{h,\,k'}=c_{h,k}\quad\text{for }(h,k) \in E^+, 
\qquad
 c^{R'}_{\pi(i)',\,i}=-c_{i,\pi(i)}\quad\text{for }(\pi(i),i)\in E^{-}.
\]

\end{definition}

Informally, we duplicate the vertex set $V$, with original copy $A$ and duplicate copy $B$. All insertion arcs are oriented from $A$ to $B$, and all removal arcs are oriented from $B$ to $A$. Thus $R'$ is a bipartite graph, and every directed cycle in $R'$ alternates between $A$ and $B$ and hence between insertion arcs ($E^{+}$) and removal arcs ($E^{-}$) when interpreted back on $R$. Costs of the arcs are mirrored, as well. Thus, under the inverse mapping back to $R$, every directed cycle $C'$ in $R'$ corresponds to a tour–alternating cycle $C$ in $R$ with the same cost:
\begin{align*}
c^{R'}(C')=\sum_{e'\in C'} c^{R'}(e')
& \;=\; 
\sum_{(h,k')\in C'} c_{h,k'}^{R'} + \sum_{(\pi(i)',i)\in C'} c^{R'}_{\pi(i)',\,i}
\;=\;\\
& \;=\; \sum_{(h,k)\in C\cap E^+} c_{h,k}
\;-\!\!\sum_{(\pi(i),i)\in C\cap E^-} c_{i,\pi(i)}\;=\;\\
& \;=\; \sum_{(h,k)\in C\cap E^+} c^R_{h,k}
\;+\!\!\sum_{(\pi(i),i)\in C\cap E^-} c^R_{\pi(i),i}
\;=\;\sum_{e\in C} c^{R}(e)
\;=\; c^{R}(C).
\end{align*}

Figure~\ref{fig:sepgraph} visualizes the construction. Figure~\ref{fig:sepgraph}a) shows a tour graph $R=(G,T)$, and Figure~\ref{fig:sepgraph}b) the corresponding separated graph $R'$. Let us summarize the observed properties.

\begin{figure}[h]
    \begin{center}
\begin{tikzpicture}[shorten >=2pt, scale=0.75]

    \draw[line width=1pt,>=triangle 45, black, <->] (2.5,2) -- (5,2);
        
    \path 
    (-2.2,4) node[circle,draw](y) {   $1$}
    (0,4) node[circle,draw](z) {   $2$}
    (1.3,2) node[circle,draw](w) {   $3$}
    (0,0) node[circle,draw](t) {   $4$}
    (-2.2,0) node[circle,draw](g) {   $5$};

    \draw[line width=1pt,decoration={markings,mark=at position 1 with
    {\arrow[scale=1.5,>=stealth]{>}}}, red] (y) -- (w);
    \draw[line width=1pt,decoration={markings,mark=at position 1 with
    {\arrow[scale=1.5,>=stealth]{>}}}, red] (y) -- (t);
    \draw[line width=1pt,decoration={markings,mark=at position 1 with
    {\arrow[scale=1.5,>=stealth]{>}}}, red] (z) -- (t);
    \draw[line width=1pt,decoration={markings,mark=at position 1 with
    {\arrow[scale=1.5,>=stealth]{>}}}, red] (z) -- (g);
    \draw[line width=1pt,decoration={markings,mark=at position 1 with
    {\arrow[scale=1.5,>=stealth]{>}}}, red] (w) -- (g);
    
    \draw[line width=1pt,blue, decoration={markings,mark=at position 1 with
    {\arrow[scale=1,>=stealth]{>}}}, postaction={decorate}] (z) -- (y);
    \draw[line width=1pt,blue, decoration={markings,mark=at position 1 with
    {\arrow[scale=1,>=stealth]{>}}}, postaction={decorate}]  (w) -- (z);
    \draw[line width=1pt,blue, decoration={markings,mark=at position 1 with
    {\arrow[scale=1,>=stealth]{>}}}, postaction={decorate}]  (t) -- (w);
    \draw[line width=1pt,blue, decoration={markings,mark=at position 1 with
    {\arrow[scale=1,>=stealth]{>}}}, postaction={decorate}] (g) -- (t);
    \draw[line width=1pt,blue, decoration={markings,mark=at position 1 with
    {\arrow[scale=1,>=stealth]{>}}}, postaction={decorate}] (y) -- (g);

    \path (6,6) node[circle,draw] (x) {   $1$}
    (6,4) node[circle,draw](y) {   $2$}
    (6,2) node[circle,draw](z) {   $3$}
    (6,0) node[circle,draw](w) {   $4$}
    (6,-2) node[circle,draw](t) {   $5$}
    (10,6) node[circle,draw] (u) {   $1'$}
    (10,4) node[circle,draw](v) {   $2'$}
    (10,2) node[circle,draw](h) {   $3'$}
    (10,0) node[circle,draw](i) {   $4'$}
    (10,-2) node[circle,draw](j) {   $5'$};
    
    \draw[line width=1pt,red, decoration={markings,mark=at position 1 with
    {\arrow[scale=1.5,>=stealth]{>}}}, postaction={decorate}] (x) -- (h);
    \draw[line width=1pt,red, decoration={markings,mark=at position 1 with
    {\arrow[scale=1.5,>=stealth]{>}}}, postaction={decorate}] (x) -- (i);
    \draw[line width=1pt,red, decoration={markings,mark=at position 1 with
    {\arrow[scale=1.5,>=stealth]{>}}}, postaction={decorate}] (x) -- (j);
    \draw[line width=1pt,red, decoration={markings,mark=at position 1 with
    {\arrow[scale=1.5,>=stealth]{>}}}, postaction={decorate}] (y) -- (i);
    \draw[line width=1pt,red, decoration={markings,mark=at position 1 with
    {\arrow[scale=1.5,>=stealth]{>}}}, postaction={decorate}] (y) -- (j);
    \draw[line width=1pt,red, decoration={markings,mark=at position 1 with
    {\arrow[scale=1.5,>=stealth]{>}}}, postaction={decorate}] (z) -- (u);
    \draw[line width=1pt,red, decoration={markings,mark=at position 1 with
    {\arrow[scale=1.5,>=stealth]{>}}}, postaction={decorate}] (z) -- (j);
    \draw[line width=1pt,red, decoration={markings,mark=at position 1 with
    {\arrow[scale=1.5,>=stealth]{>}}}, postaction={decorate}] (w) -- (u);
    \draw[line width=1pt,red, decoration={markings,mark=at position 1 with
    {\arrow[scale=1.5,>=stealth]{>}}}, postaction={decorate}] (w) -- (v);
    \draw[line width=1pt,red, decoration={markings,mark=at position 1 with
    {\arrow[scale=1.5,>=stealth]{>}}}, postaction={decorate}] (t) -- (u);
    \draw[line width=1pt,red, decoration={markings,mark=at position 1 with
    {\arrow[scale=1.5,>=stealth]{>}}}, postaction={decorate}] (t) -- (h);
    \draw[line width=1pt,red, decoration={markings,mark=at position 1 with
    {\arrow[scale=1.5,>=stealth]{>}}}, postaction={decorate}] (t) -- (v);

    \draw[line width=1pt,blue, decoration={markings,mark=at position 1 with
    {\arrow[scale=1,>=stealth]{>}}}, postaction={decorate}] (v) -- (x);
    \draw[line width=1pt,blue, decoration={markings,mark=at position 1 with
    {\arrow[scale=1,>=stealth]{>}}}, postaction={decorate}] (h) -- (y);
    \draw[line width=1pt,blue, decoration={markings,mark=at position 1 with
    {\arrow[scale=1,>=stealth]{>}}}, postaction={decorate}] (i) -- (z);
    \draw[line width=1pt,blue, decoration={markings,mark=at position 1 with
    {\arrow[scale=1,>=stealth]{>}}}, postaction={decorate}] (j) -- (w);
    \draw[line width=1pt,blue, decoration={markings,mark=at position 1 with
    {\arrow[scale=1,>=stealth]{>}}}, postaction={decorate}] (u) -- (t);
    
    \end{tikzpicture}  
    
    \vspace{0.3cm}
    
    \footnotesize{a)} \hspace{5.8cm} \footnotesize{b)}
    
    \vspace*{-0.3cm}
    \end{center}
    
    \caption{Separated graph construction. a) Tour graph $R=(G,T)$. b) Separated graph $R'$.}\label{fig:sepgraph}
\end{figure}
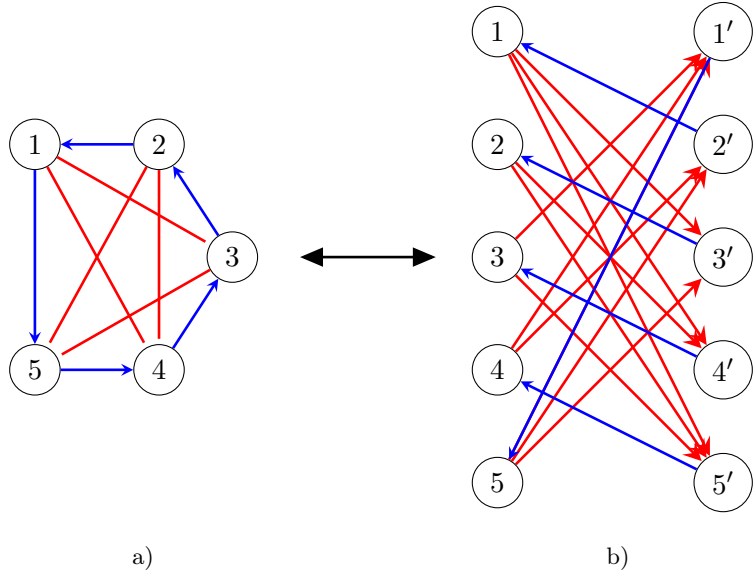

\begin{theorem}[Separated Graph Correspondence]\label{thm:sepgraph-correspondence}
Let $R'=(A\cup B,E',c^{R'})$ be the separated graph of a tour graph $R=(G,T)$. Then
\begin{enumerate}
\item \textbf{Alternation.} Every directed cycle $C'$ in $ R'$ alternates between $A$ and $B$.
\item \textbf{Cycle mapping.} Replacing each arc $(h,k')$ of $C'$ by $(h,k)$ and each arc $(\pi(i)',i)$ by $(\pi(i),i)$ yields a tour–alternating cycle $C$ in $R$. This defines a bijection between directed cycles $C'$ in $R'$ and tour–alternating cycles $C$ in $R$.
\item \textbf{Cost preservation.} For any cycle $C'$ in $ R'$ with image $C$ in $R$,
$
c^{R'}(C')\;=\; c^{R}(C).
$
\end{enumerate}
The same statements extend cyclewise to circulations, with total costs preserved.
\end{theorem}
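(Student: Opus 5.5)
The proof proceeds directly from the bipartite structure of $R'$ given in Definition~\ref{def:separated}. I would first record the key structural fact: every arc of $E'$ either goes from $A$ to $B$ (an image $(h,k')$ of an insertion arc $(h,k)\in E^{+}$) or from $B$ to $A$ (an image $(\pi(i)',i)$ of a removal arc $(\pi(i),i)\in E^{-}$). There are no arcs inside $A$ or inside $B$. Hence any directed walk in $R'$ alternates sides, which gives \textbf{Alternation} immediately. As a consequence, every closed walk has even length and its arcs alternate between images of $E^{+}$ and images of $E^{-}$.

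For \textbf{Cycle mapping}, I define the projection $\phi$ that forgets primes: $\phi(h,k')=(h,k)$ and $\phi(\pi(i)',i)=(\pi(i),i)$. Consecutive arcs $(h,k'),(k',j)$ in $C'$ share the endpoint $k'$, and their images $(h,k),(k,j)$ share $k$. So $\phi(C')$ is a closed directed walk in $R$. By alternation it uses $E^{+}$ and $E^{-}$ alternately, cyclically as well, since the length is even. Therefore $\phi(C')$ is a tour-alternating cycle in the sense of Definition~\ref{def:ta-cycle}; recall that this definition does not require simplicity.

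For the inverse, take a tour-alternating cycle $C$ in $R$. I lift each insertion arc $(h,k)$ to $(h,k')$ and each removal arc $(\pi(i),i)$ to $(\pi(i)',i)$. Alternation guarantees that an insertion arc ending at $k$ is followed by a removal arc starting at $k$, whose lift starts at $k'$. Likewise, a removal arc ending at $i$ is followed by an insertion arc starting at $i\in A$. So the lifts concatenate into a closed walk in $R'$. The two constructions are mutually inverse arc by arc, which gives the bijection. Injectivity of $\phi$ on arcs holds because $E^{+}\cap E^{-}=\emptyset$: insertion arcs are non-tour arcs whose reverses are also not in $T$, while removal arcs are reverses of tour arcs. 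Hence each arc of $R$ has a unique preimage.

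The main obstacle is the meaning of ``cycle'' in the bijection. A simple cycle in $R'$ may project to a non-simple cycle in $R$: vertex $v$ can appear once as $v\in A$ and once as $v'\in B$, as in Figure~\ref{fig:floydunion}. Conversely, a simple tour-alternating cycle lifts to a simple cycle. I would handle this by stating the bijection for closed directed walks considered up to cyclic shift, equivalently for the (possibly non-simple) cycles of Definition~\ref{def:ta-cycle}. I would also remark that each vertex of $R$ is visited at most twice by the image of a simple cycle in $R'$, once through each copy.

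\textbf{Cost preservation} is the arc-wise identity $c^{R'}(e')=c^{R}(\phi(e'))$, which follows from the definitions of $c^{R'}$ and $c^{R}$; summing over the arcs gives the displayed computation preceding the theorem. For circulations, I decompose a circulation in $R'$ into arc-disjoint cycles and apply $\phi$ cyclewise. Injectivity of $\phi$ on arcs keeps the images arc-disjoint, so the image is a tour-alternating circulation by Definition~\ref{def:ta-circ}. Conversely, a decomposition of a tour-alternating circulation lifts cyclewise, and total cost is preserved by additivity.
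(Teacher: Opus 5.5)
Your proposal is correct and follows essentially the same route as the paper. The paper justifies the theorem, without a separate proof, by noting that insertion arcs are oriented $A\to B$ and removal arcs $B\to A$, so every cycle in $R'$ alternates and projects to a tour-alternating cycle in $R$, and by the arc-wise cost computation displayed just before the statement; your explicit inverse lift, your use of $E^{+}\cap E^{-}=\emptyset$ for injectivity on arcs, and your treatment of simple versus non-simple cycles make rigorous what the paper leaves informal.
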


Our goal is to apply (tailored versions of) classic algorithms such as Floyd-Warshall for negative cycle (or circulation) detection on the separated graph. These routines take as input a distance matrix $C=(c_{i,j})$ and predecessor matrix $P$. We conclude this section with the simple matrix-based construction of the separated graph, summarized in Algorithm~\ref{alg:transf}, and record the computational cost.

\begin{lemma}\label{lem:sep-construct}
Let $C=(c_{i,j})$ be an $n\times n$ distance matrix for a graph $G$ and let $T$ be a tour.
The construction of the $2n\times 2n$ distance matrix $D$ and predecessor matrix $P$ for the separated graph $R'$ runs in $\Theta(n^2)$ time in the arithmetic model of computation.
\end{lemma}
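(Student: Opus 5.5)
We prove the bound by exhibiting an explicit construction of $D$ and $P$ and counting operations in two directions. For the upper bound, index the $2n$ vertices of $R'$ so that rows and columns $1,\dots,n$ correspond to $A$ and $n+1,\dots,2n$ to $B$, and view $D$ as a $2\times 2$ block matrix of $n\times n$ blocks.

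\textbf{Block structure.} By Definition~\ref{def:separated}, arcs only run from $A$ to $B$ and from $B$ to $A$. Hence the diagonal blocks $D_{AA}$ and $D_{BB}$ contain only $+\infty$ off the diagonal, with the diagonal set to $0$ as is standard for Floyd--Warshall input.

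\textbf{Insertion block $D_{AB}$.} Set $D_{h,\,n+k}=c_{h,k}$ whenever $h\neq k$ and neither $(h,k)$ nor $(k,h)$ lies in $T$. All remaining entries of this block are $+\infty$. To test tour membership in $O(1)$, first compute from $\pi$ an inverse map $\pi^{-1}$ in $O(n)$ time. Then $(h,k)\in T$ iff $\pi(h)=k$, and $(k,h)\in T$ iff $\pi(k)=h$.

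\textbf{Removal block $D_{BA}$.} Fill it with $+\infty$. Then, for each $i\in V$, overwrite $D_{n+\pi(i),\,i}=-c_{i,\pi(i)}$. This uses $O(n)$ updates after the $n^2$ initialization.

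\textbf{Predecessor matrix.} Set $P_{u,v}=u$ whenever $D_{u,v}$ is finite and $u\neq v$, and set $P_{u,v}$ to a null marker otherwise. Each entry is determined in $O(1)$ from $D_{u,v}$.

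Every one of the $4n^2$ entries of $D$ and $P$ is written a constant number of times, and each write takes $O(1)$ arithmetic or comparison operations. Adding the $O(n)$ preprocessing for $\pi^{-1}$ gives $O(n^2)$ in total.

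For the lower bound, any algorithm that outputs the matrices $D$ and $P$ explicitly must write $4n^2$ entries, so it needs $\Omega(n^2)$ time. (Reading the input matrix $C$ alone already costs $\Omega(n^2)$.) Together with the upper bound, this gives $\Theta(n^2)$.

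\textbf{Main obstacle.} The only delicate step is testing whether $(h,k)$ or $(k,h)$ is a tour arc in constant time. A naive scan of $T$ for each pair would give $O(n^3)$. Precomputing $\pi$ and $\pi^{-1}$ as arrays avoids this and is the key point to state carefully. One should also check that the index bookkeeping matches Definition~\ref{def:separated}: the removal arc corresponding to $(\pi(i),i)$ must go from $\pi(i)'$ to $i$, not from $i'$ to $\pi(i)$.
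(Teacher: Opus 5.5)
Your proposal is correct and follows the paper's argument: initialize the $2n\times 2n$ matrices in $\Theta(n^2)$ time, then do constant work for each ordered pair $(i,j)$, which gives $\Theta(n^2)$ in total. Your use of the successor map $\pi$ to test tour membership in $O(1)$ makes explicit the constant-time check that Algorithm~\ref{alg:transf} takes for granted in lines~8--11; you also check both $(h,k)$ and $(k,h)$, as Definition~\ref{def:separated} requires, whereas the algorithm tests only $(i,j)\in T$.
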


\begin{proof}
In the arithmetic model of computation, it suffices to count elementary operations. Algorithm~\ref{alg:transf} initializes the $2n\times 2n$ matrices $D$ and $P$ in line~3, which takes $\Theta(n^2)$ time.
The nested loops over $i$ and $j$ (lines~4--5) lead to exactly $n^2$ iterations.
In each iteration, the code executes a constant amount of work: it skips over cases $i = j$ (lines~6--7) and, for $i \neq j$, checks whether $(i,j)\in T$ and then writes one entry in each of $D$ and $P$ depending on whether  $(i,j)\in T$ (lines~8--9) or $(i,j)\notin T$ (lines~10--11). Therefore the total running time is $\Theta(n^2)$.
\end{proof}

\begin{algorithm}[t]\small
\caption{Matrix Construction for Separated Graph}\label{alg:transf}
\begin{algorithmic}[1]
\State \textbf{Input:} Distance matrix $C$ for graph $G$, tour $T$
\State \textbf{Output:} Distance matrix $D$, predecessor matrix $P$ for separated graph $R'$
\State Initialize $2n\times 2n$ matrices $D$ and $P$ with all entries $+\infty$
\For{$i=1$ \textbf{to} $n$}
  \For{$j=1$ \textbf{to} $n$}
    \If{$i=j$} \State \textbf{continue}
    \ElsIf{$(i,j)\in T$}
      \State $D[n+j,\,i]\gets -c_{i,j}$; \quad $P[n+j,\,i]\gets n+j$
    \Else
      \State $D[i,\,n+j]\gets c_{i,j}$; \quad $P[i,\,n+j]\gets i$
    \EndIf
  \EndFor
\EndFor
\State \textbf{return} $D,P$
\end{algorithmic}
\end{algorithm}

In view of \textsc{Cycap} as a whole, the quadratic running time of the graph transformation, i.e., of Algorithm~\ref{alg:transf}, is dominated by the running times of the negative cycle and circulation routines applied to the separated graph, which are cubic and super-quadratic, respectively. 

\section{Implementation}\label{sec:implementation}
This section provides a practical perspective on the proposed heuristic. In Section \ref{sec:variants}, we begin by describing the three variants of \textsc{Cycap} and highlight differences in cycle detection and circulation strategies. In Section \ref{sec:tailoredclassic}, we then provide information on tailored implementations of classic algorithms that are used as subroutines.

\subsection{Variants of the \textsc{Cycap} Heuristic}\label{sec:variants}

The \textsc{Cycap} heuristic was implemented in three variants -- \textsc{Cycap-F}, \textsc{Cycap-M}, and \textsc{Cycap-C} -- each differing in how cycles and circulations are identified. The source code for all three \textsc{Cycap} variants, along with scripts to reproduce our experiments in Section \ref{sec:experiments}, is available at \url{https://github.com/zsorenso/CCTP}. For some background on classic negative-cycle finding and minimum-cost circulation strategies, see for example \cite{o-93}.\\\\
\noindent{\bf Cycap-F (Floyd–Warshall-based approach)}
\textsc{Cycap-F} applies the Floyd–Warshall algorithm to the separated graph to detect negative cycles. Unlike a standard early-stop approach, where the algorithm terminates upon detection of a negative cycle due to the ability of returning to the same vertex at negative cost, \textsc{Cycap-F} runs the algorithm to completion, i.e., it continues to perform iterations until all vertices have been considered. A predecessor matrix is stored and updated in each iteration. After the algorithm finishes, an adjusted \emph{Predecessor Readout} routine is applied to every stored matrix row with a negative diagonal entry. This routine systematically backtracks through the stored predecessor matrix to identify negative cycles for all vertices identified to lie on some negative cycle, trimming extraneous vertices and collecting all cycles for evaluation. Each detected cycle is translated into a tour-alternating cycle in the tour graph for cancellation (and patch). The heuristic evaluates all resulting candidate tours, and returns the cheapest one. This design allows \textsc{Cycap-F} to leverage multiple improving cycles rather than a single update. For details on the predecessor readout routine, including pseudocode, see Section~\ref{sec:tailoredclassic}.

There is a tradeoff between computational speed and the search for multiple negative cycles: Floyd–Warshall has a $O(n^3)$ worst-case running time for a graph with $n$ vertices, but in its standard implementation with termination upon finding a first negative cycle often stops early in practice. Our approach is in $\Theta(n^3)$ due to performing a complete run, making this variant computationally intensive for large instances.\\\\ 
\noindent{\bf Cycap-M (Minimum-mean cycle approach)}
\textsc{Cycap-M} adopts a different strategy by searching for a cycle with the minimum-mean cost rather than any negative cycle. A minimum-mean cycle is a cycle whose total cost divided by its number of arcs is lowest among all cycles in the graph. This criterion prioritizes (short) cycles that offer the greatest improvement per arc used. In Section \ref{sec:conclusion}, we briefly discuss some of the benefits of cycles with few arcs in \textsc{Cycap}.

Minimum-mean cycle canceling, in Karp's implementation, runs in $O(m \cdot n)$, where $m$ is the number of arcs. This is asymptotically faster than Floyd–Warshall for sparse graphs, but the algorithm comes with more computational effort in each iteration. Additionally, in our proof-of-concept implementation we use a distance matrix, a dense data structure, to store information on the separated graph (see Algorithm \ref{alg:transf} in Section \ref{sec:compstrategy}), so we are not making direct use of some of the possible advantage of sparsity.
\\\\
\noindent{\bf Cycap-C (Circulation-based approach)} \textsc{Cycap-C} differs from the other variants by solving a minimum-cost circulation problem rather than searching for individual cycles. This approach computes a global flow adjustment that may simultaneously cancel multiple negative cycles, rather than performing a single-cycle improvement. The circulation problem can be solved with specialized network flow algorithms or linear programming, with practical algorithms having running times on the order of $O(m \cdot \text{polylog}\; n)$. In our implementation, we solve the circulation problem on the separated graph via linear programming. As we will see, \textsc{Cycap-C} consistently and significantly outperforms the other variants in running time on large instances. 

We note that identifying the globally-optimal negative cycle -- i.e., the cycle with the smallest total cost -- is NP-hard in general. As a result, \textsc{Cycap‑F} (which relies on Floyd–Warshall to find some negative cycle) and \textsc{Cycap‑M} (which targets a minimum-mean negative cycle) do not guarantee finding the most cost-reducing cycle possible. In contrast, \textsc{Cycap‑C} formulates a minimum-cost circulation problem for the separated graph, which can be solved exactly. This means \textsc{Cycap‑C} effectively identifies an \emph{optimal} circulation; each individual cycle in this circulation is of negative cost, and together they are a globally optimal collection of arc-disjoint cycles. This is a key reason for the particularly good performance of \textsc{Cycap‑C} we will observe in our experiments. 

\subsection{Tailored Versions of Classic Algorithms}\label{sec:tailoredclassic}

This section collects the implementation-level adaptations of classic tools that we use as subroutines within \textsc{Cycap}. First, we describe an adjusted \emph{$k^{*}$-opt Algorithm} for directed instances, which explicitly accounts for the asymmetry created by segment reversals and therefore yields stronger local baselines. Next, we explain an adjusted \emph{Predecessor Readout}, a variant of Floyd-Warshall predecessor readout designed to harvest several negative cycles rather than only detect the existence of one (and possibly returning that first cycle found), providing a collection of candidate cycles for cancellation in \textsc{Cycap-F}. Finally, we describe an adjusted \emph{Patching Routine}, an iterative scheme that merges the subtours (and occasional isolated vertices) produced by a cancellation step into a single tour by greedily selecting cheapest patches.\\

\begin{algorithm}[t]\small
\caption{$k^*$-opt Algorithm (Directed TSP)}\label{alg:kstar-opt}
\begin{algorithmic}[1]
\Require Distance matrix $C$ for graph $G$, tour $T$, parameter $k \in \mathbb{N}$
\Ensure Improved tour $T$
\State \textbf{repeat}
    \State $\textit{improved} \gets \textbf{false}$
    \ForAll{choices of $k$ arcs $E_k$ in $T$}
        \State Remove $E_k$ from $T$ to create $k$ path segments
        \State $\mathcal{S}_k \gets$ all $k$-opt reconnections of the $k$ segments into a tour
        \State $\mathcal{S}_k^{\text{rev}} \gets \{\,\operatorname{rev}(U) : U \in \mathcal{S}_k \,\}$
        \State $S \gets \{T\} \cup \mathcal{S}_k \cup \mathcal{S}_k^{\text{rev}} \cup \{\operatorname{rev}(T)\}$
        \State $U^{\star} \gets \arg\min \{\, c(U) : U \in S \,\}$ 
        \If{$c(U^{\star}) < c(T)$}
            \State $T \gets U^{\star}$ 
            \State $\textit{improved} \gets \textbf{true}$
            \State \textbf{break} 
        \EndIf
    \EndFor
\State \textbf{until} $\textit{improved} = \textbf{false}$ 
\State \Return $T$
\end{algorithmic}
\end{algorithm}

\noindent{\bf k$^*$-opt Algorithm (Directed TSP).}
For directed instances, as an alternative to classic $k$-opt, we use an adjusted version, denoted $k^*$-opt, to account for asymmetry in arc costs when segments are reversed. See Algorithm \ref{alg:kstar-opt} for a description in pseudocode. 

Let $T$ be the current tour and $c(T)$ its cost. Further, let $E_k$ denote a choice of $k$ tour arcs (line~3). We denote by $\mathcal{S}_k(T)$ the set of directed candidate tours formed by removing the arcs in $E_k$ and reconnecting the path segments, as in an iteration of standard $k$-opt (lines~4--5). The $k^*$-opt rule augments this set by also evaluating the reversed orientation $\operatorname{rev}(U)$ of each candidate $U$, as well as the reversed orientation $\operatorname{rev}(T)$ of $T$ itself (lines~6--7):
\[
\mathcal{S}_k^{*}(T) \;=\; \mathcal{S}_k(T)\,\cup\,\{\operatorname{rev}(U)\;:\;U\in \mathcal{S}_k(T)\}\,\cup\,\{\operatorname{rev}(T)\}.
\]
The update then selects
\[
T^{\text{new}} \;=\; \arg\min\Big\{\, c(U)\;:\; U \in \mathcal{S}_k^{*}(T)\,\Big\},
\]
and accepts $T^{\text{new}}$ if and only if $c(T^{\text{new}})<c(T)$ (lines~8--13). Concretely, for a choice $E_2$ of two arcs or a choice $E_3$ of three arcs, $2^*$-opt compares $T$ to three tours $\{T',\,\operatorname{rev}(T'), \operatorname{rev}(T)\}$ where $T'$ is the unique tour in $S_2(T)$, while $3^*$-opt compares the original tour $T$ to the $7$ standard reconnections $S_k(T)$ together with their $7$ reversals and $\operatorname{rev}(T)$, for $15$ candidates in total. 
\\\\
\noindent{\bf Predecessor Readout Routine.} 
Classic implementations of the Floyd-Warshall algorithm (see for example \cite{amo-93}) only detect existence of a (single) negative cycle and only provide a canonical reconstruction of that first cycle found. Most implementations early–stop once a negative diagonal entry $D[i,i]$ appears. 
The predecessor matrix $P$ upon such an early-stop allows a readout of the cycle by starting at $P[i,i]$ and backtracking through predecessors until vertex $i$ is reached again. 

An adjusted predecessor readout is used by \textsc{Cycap-F} to extract negative cycles from the predecessor matrix produced by a complete run of Floyd–Warshall, i.e., a run that does not stop when diagonal entries become negative. The pseudocode is shown in Algorithm \ref{alg:adjreadout}. Because of performing a complete run, we have the opportunity to find multiple negative cycles. However, accessing them becomes more technical: If a shortest walk from vertex $i$ to vertex $j$ contains a negative cycle, the standard backtracking process might encounter an intermediate vertex twice, and vertex $i$ would never be reached. This also holds for $i=j$.

Our adjusted readout iterates over all vertices with negative diagonal entries in the final distance matrix (lines~2-7), which are guaranteed to lie on a negative cycle, and also iterates over all vertices $j$ (line~8), to potentially find additional negative cycles. For each combination of $i$ and $j$ and the associated walk, it backtracks through predecessors to identify negative cycles (lines~10, 16--17), which is successful when a vertex is encountered twice (line~11). When that happens, the routine trims non-cycle vertices and stores the cycle (lines~12--14). Multiple cycles can be returned.\\

\begin{algorithm}[t]\small
\caption{Predecessor Readout}\label{alg:adjreadout}
\begin{algorithmic}[1]
\Require Distance matrix $D$, predecessor matrix $P$ formed by Floyd-Warshall algorithm 
\State $L^* \gets$ empty list of cycles, $L \gets \emptyset$
\For{$i = 1$ to $n$}
    \If{$D[i,i] < 0$}
        \State Store $i$
    \EndIf
\EndFor
\For{each stored $i$}
    \For{$j = 1$ to $n$}
        \State $k \gets i$
        \While{$P[i,j] \neq k$}
            \If{$P[i,j]$ already in $L$}
                \State Remove all values in $L$ that precede $P[i,j]$
                \State Store $L$ in $L^*$
                \State Clear $L$ and exit while loop
            \EndIf
            \State Insert $P[i,j]$ into $L$
            \State $i \gets P[i,j]$
        \EndWhile
    \EndFor
\EndFor
\State \Return $L^*$
\end{algorithmic}
\end{algorithm}

\noindent{\bf Patching Routine.} After canceling a tour–alternating cycle or circulation, the resulting flow may decompose into several subtours (and possibly isolated vertices). An adjusted patching routine reconnects these components into a single tour by iteratively applying the cheapest available \emph{patch}, the operation in Karp's classic algorithm \cite{Patch-78}.

A patch of two distinct subtours $S_1, S_2$ removes two arcs -— one from each of $S_1, S_2$ -— and adds two cross–arcs that merge them. 
The cost of a candidate patch that deletes $(i,j)$ from $S_1$ and $(h,k)$ from $S_2$ and adds $(i,k)$ and $(h,j)$ is
\[
\Delta(C)_{i,j;h,k} \;=\; c_{i,k} + c_{h,j} - c_{i,j} - c_{h,k}.
\]
In the symmetric setting the same formula applies with undirected edge costs. Our routine selects two arbitrary subtours and applies the patch of minimum $\Delta$, repeating until a single tour remains. See Figure \ref{fig:apatch} for an example with three subtours; in the naming of subtours, we use subscripts to track which ones were patched.

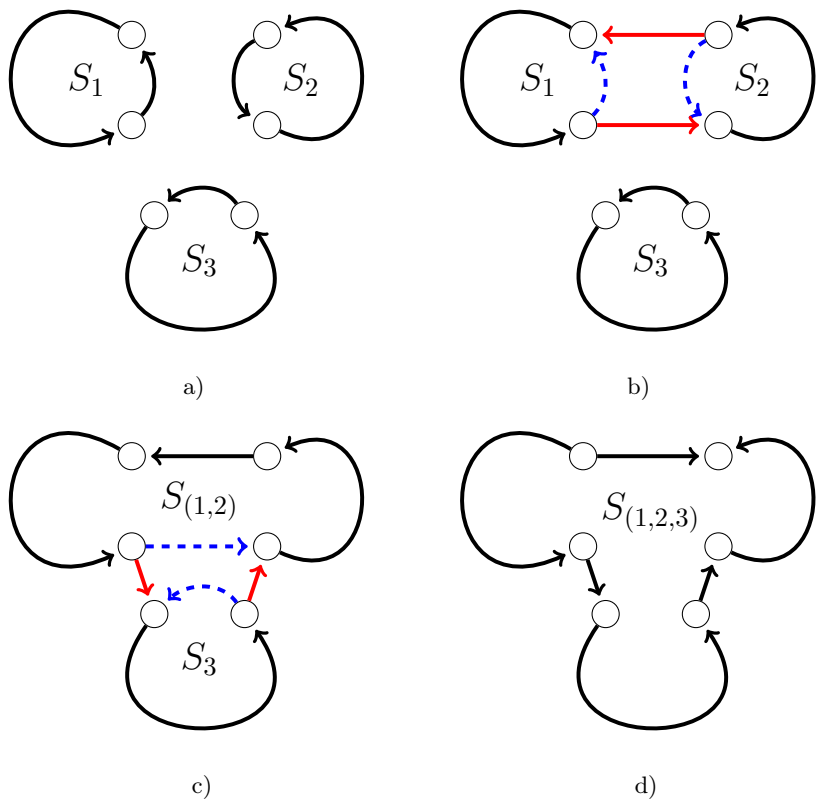
\begin{figure}[h]
\begin{center}
    \begin{tikzpicture}[shorten >=2pt, scale=0.3]
    \path (0,2) node[circle,draw] (a) {  }
    (0,-2) node[circle,draw](b) {   } 
    (6,2) node[circle,draw] (c) {  }
    (6,-2) node[circle,draw](d) {   } 
    (1,-6) node[circle,draw] (e) {  }
    (5,-6) node[circle,draw](f) {   } 
    (-2,0) node (g) {\Large{$S_1$}} 
    (7.5,0) node (h) {\Large{$S_2$}}
    (3,-8) node (i) {\Large{$S_3$}};
    
    \draw[line width=1.5pt,black, font=\bf, ->]  (a) to [out=150, in=205, looseness=4] (b);
    \draw[line width=1.5pt,black, font=\bf, ->]  (b) to [out=45, in=305, looseness=1] (a);
    \draw[line width=1.5pt,black, font=\bf, ->]  (c) to [out=205, in=150, looseness=1] (d);
    \draw[line width=1.5pt,black, font=\bf, ->]  (d) to [out=335, in=25, looseness=3] (c);
    \draw[line width=1.5pt,black, font=\bf, ->]  (e) to [out=235, in=305, looseness=4] (f);
    \draw[line width=1.5pt,black, font=\bf, ->]  (f) to [out=125, in=45, looseness=1] (e);

    \path (20,2) node[circle,draw] (j) {  }
    (20,-2) node[circle,draw](k) {   } 
    (26,2) node[circle,draw] (l) {  }
    (26,-2) node[circle,draw](m) {   } 
    (21,-6) node[circle,draw] (n) {  }
    (25,-6) node[circle,draw](o) {   } 
    (18,0) node (p) {\Large{$S_1$}} 
    (27.5,0) node (q) {\Large{$S_2$}}
    (23,-8) node (r) {\Large{$S_3$}};
    
    \draw[line width=1.5pt,black, font=\bf, ->]  (j) to [out=150, in=205, looseness=4] (k);
    \draw[line width=1.5pt,blue, font=\bf, ->,dashed]  (k) to [out=45, in=305, looseness=1] (j);
    \draw[line width=1.5pt,blue, font=\bf, ->, dashed]  (l) to [out=205, in=150, looseness=1] (m);
    \draw[line width=1.5pt,black, font=\bf, ->]  (m) to [out=335, in=25, looseness=3] (l);
    \draw[line width=1.5pt,black, font=\bf, ->]  (n) to [out=235, in=305, looseness=4] (o);
    \draw[line width=1.5pt,black, font=\bf, ->]  (o) to [out=125, in=45, looseness=1] (n);
    \draw[line width=1.5pt,red, font=\bf, ->]  (k) -- (m);
    \draw[line width=1.5pt,red, font=\bf, ->]  (l) -- (j);
    
    \end{tikzpicture}

        \hspace*{0.5cm}\footnotesize{a)} \hspace{5.4cm} \footnotesize{b)}
    \vspace*{-0.3cm}
    \vspace*{-0.3cm}
    \vspace*{-0.3cm}
   
\end{center}

\begin{center}
    \begin{tikzpicture}[shorten >=2pt, scale=0.3]
    \path (0,2) node[circle,draw] (a) {  }
    (0,-2) node[circle,draw](b) {   } 
    (6,2) node[circle,draw] (c) {  }
    (6,-2) node[circle,draw](d) {   } 
    (1,-5) node[circle,draw] (e) {  }
    (5,-5) node[circle,draw](f) {   } 
    (3,0) node (g) {\Large{$S_{(1,2)}$}} 
    (3,-7) node (i) {\Large{$S_3$}};
    
    \draw[line width=1.5pt,black, font=\bf, ->]  (a) to [out=150, in=205, looseness=4] (b);
    \draw[line width=1.5pt,black, font=\bf, ->]  (d) to [out=335, in=25, looseness=3] (c);
    \draw[line width=1.5pt,black, font=\bf, ->]  (e) to [out=235, in=305, looseness=4] (f);
    \draw[line width=1.5pt,blue, font=\bf, ->, dashed]  (f) to [out=125, in=45, looseness=1] (e);
    \draw[line width=1.5pt,black, font=\bf, ->]  (c) -- (a);
    \draw[line width=1.5pt,red, font=\bf, ->]  (b) -- (e);
    \draw[line width=1.5pt,red, font=\bf, ->]  (f) -- (d);
    \draw[line width=1.5pt,blue, font=\bf, ->, dashed]  (b) -- (d);

    \path (20,2) node[circle,draw] (j) {  }
    (20,-2) node[circle,draw](k) {   } 
    (26,2) node[circle,draw] (l) {  }
    (26,-2) node[circle,draw](m) {   } 
    (21,-5) node[circle,draw] (n) {  }
    (25,-5) node[circle,draw](o) {   } 
    (23,-0.5) node (p) {\Large{$S_{(1,2,3)}$}} 
;
    
    \draw[line width=1.5pt,black, font=\bf, ->]  (j) to [out=150, in=205, looseness=4] (k);
    \draw[line width=1.5pt,black, font=\bf, ->]  (m) to [out=335, in=25, looseness=3] (l);
    \draw[line width=1.5pt,black, font=\bf, ->]  (n) to [out=235, in=305, looseness=4] (o);
    \draw[line width=1.5pt,black, font=\bf, ->]  (j) -- (l);
    \draw[line width=1.5pt,black, font=\bf, ->]  (k) -- (n);
    \draw[line width=1.5pt,black, font=\bf, ->]  (o) -- (m);
    
    \end{tikzpicture}

    \hspace*{0.5cm}\footnotesize{c)} \hspace{5.4cm} \footnotesize{d)}

    \vspace*{-0.3cm}
    
\end{center}

\caption{A patching example. a) Initial set of subtours b) First iteration (patch of $S_1$ to $S_2$) c) Second iteration (patch of $S_{(1,2)}$ to $S_3$) d) Resulting tour $S_{(1,2,3)}$}\label{fig:apatch}
\end{figure}

Unlike in the classic setting, isolated vertices arise naturally when tour-alternating circulations make use of opposite non-tour arcs (recall the discussion in Section \ref{sec:searchspace}), and so we need a special rule. To integrate an isolated vertex $v$ into a subtour $S$, we use a \emph{three–arc} patch: Delete one arc $(h,k)$ from $S$ and add the two arcs $(h,v)$ and $(v,k)$, so that $v$ gains one incoming and one outgoing arc while flow balance is preserved. Figure \ref{fig:isopatch} shows an example. We have 
\[
\Delta(C)_{h,k;v} \;=\; c_{h,v} + c_{v,k} - c_{h,k}.
\]

\begin{figure}[h]
\begin{center}
    \begin{tikzpicture}[shorten >=2pt, scale=0.3]
    \path (0,2) node[circle,draw] (a) {  }
    (0,-2) node[circle,draw](b) {   } 
    (4,0) node[circle,draw] (c) { $v$}
    (-2,0) node (g) {\Large{$S_1$}} ;
    
    \draw[line width=1.5pt,black, font=\bf, ->]  (a) to [out=150, in=205, looseness=4] (b);
    \draw[line width=1.5pt,black, font=\bf, ->]  (b) to [out=45, in=305, looseness=1] (a);
    
    \path (14,2) node[circle,draw] (j) {  }
    (14,-2) node[circle,draw](k) {   } 
    (18,0) node[circle,draw] (l) { $v$ }
    (12,0) node (p) {\Large{$S_1$}};
    
    \draw[line width=1.5pt,black, font=\bf, ->]  (j) to [out=150, in=205, looseness=4] (k);
    \draw[line width=1.5pt,blue, font=\bf, ->,dashed]  (k) to [out=45, in=305, looseness=1] (j);
    \draw[line width=1.5pt,red, font=\bf, ->]  (k) -- (l);
    \draw[line width=1.5pt,red, font=\bf, ->]  (l) -- (j);

    \path (28,2) node[circle,draw] (m) {  }
    (28,-2) node[circle,draw](n) {   } 
    (32,0) node[circle,draw] (o) { }
    (27,0) node (q) {\Large{$S_{(1,v)}$}};
    
    \draw[line width=1.5pt,black, font=\bf, ->]  (m) to [out=150, in=205, looseness=4] (n);
    \draw[line width=1.5pt,red, font=\bf, ->]  (n) -- (o);
    \draw[line width=1.5pt,red, font=\bf, ->]  (o) -- (m);
    
    \end{tikzpicture}

 \vspace*{-0.3cm}

        \hspace*{0.5cm}\footnotesize{a)} \hspace{3.8cm} \footnotesize{b)} \hspace{3.6cm} \footnotesize{c)} \hspace{1 cm}
    
    \vspace*{-0.3cm}
\end{center}

\caption{Patching an isolated vertex. a) Initial subtour $S_1$ and vertex $v$ b) Patching step c) Resulting tour $S_{(1,v)}$}\label{fig:isopatch}
\end{figure}
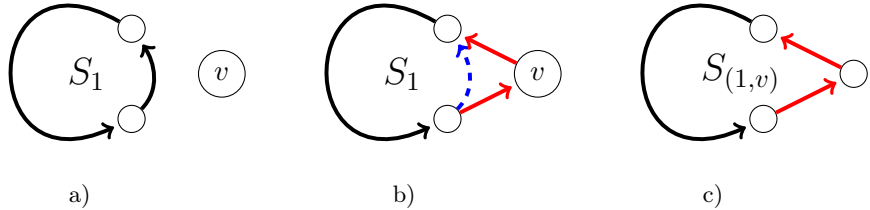
 
The routine proceeds in two phases: (i) If any isolated vertices exist, connect them first using the three–arc rule; otherwise (ii) choose a pair of subtours, evaluate every two–arc patch, and select the cheapest. An efficient implementation makes use of a \emph{successor map} $\pi(\cdot)$ on the current subtours: For each vertex $u$ on a subtour, the arc $(u,\pi(u))$ is present in the flow; this allows a simple rewiring of subtours after a merge through the update of $\pi(i)$ and $\pi(h)$ for the tails $i$ and $h$ of the deleted arcs. Each merge updates the successor map, and the process repeats until no subtours remain. In addition to dealing with isolated vertices, it also differs from Karp's patching routine \cite{Patch-78} in its iterative nature. The classic algorithm forms a locally optimal tour by patching every subtour to the largest initial subtour in a single pass. This approach works the better the longer the initial subtour is. In our experiments, we did not observe a tendency for the cycle cancels to create a large subtour. In each iteration, we perform a best patch among randomly selected subtours. 

Selecting a best patch between two subtours $S_1,S_2$ inspects $O(|S_1|\,|S_2|)$ arc combinations, so each patch runs in $O(n^2)$. 
However, each arc in $G$ is only considered at most once throughout all iterations to take the role of a cross-arc connecting two subtours. Thus, the running time is in $O(m)$ or $O(n^2)$ overall. 

\section{Computational Experiments}\label{sec:experiments}

 In this section, we present computational experiments on benchmark data sets, outline the performance metrics used for evaluation, and discuss observed trends across symmetric and directed TSP instances.

The computational experiments were conducted on benchmark instances from TSPLIB, a widely used repository for evaluating algorithms on the Traveling Salesman Problem \cite{TLIB}. All instances were treated as complete directed graphs, with undesired arcs penalized sufficiently so that they cannot appear in an optimal solution. The Symmetric TSP is readily represented on a directed graph: Each undirected edge is replaced by a pair of opposite arcs of the same cost.

Both symmetric and directed instances vary significantly in size, ranging from small problems with about 20 cities to large-scale cases with over 100 cities. For symmetric TSP experiments, the largest tested instance contained 128 cities, while directed TSP experiments included instances with up to 124 vertices. Each reported measure is based on 100 independent runs per configuration. These problem sizes and repeated trials allow us to assess the performance across different levels of complexity.

We consider an embedding of \textsc{Cycap} with three common variants: $2$-opt, $3$-opt, and a combined $2+3$-opt strategy, which applies both $2$-opt and $3$-opt moves in sequence. The results for directed instances use the $k^*$-opt baselines, as described in Section~\ref{sec:tailoredclassic}, which better reflect the search for a local optimum in that setting.  For directed TSP instances, the running times of complete runs of $k$-opt can become prohibitively large. To ensure fair and practical comparisons, we imposed a time cap on these algorithms: Each $k$-opt run was terminated once it produced a final solution or after it had executed for ten times the median running time of the corresponding \textsc{Cycap} variant. When this limit was hit, the $k$-opt run typically was stuck for a long time at the final, returned tour. This rule was applied consistently across all experiments for directed instances. 

To evaluate performance, we report three primary metrics: \emph{success rate}, \emph{gap closure}, and \emph{running time}. The success rate is the percentage of runs in which the heuristic  improved the initial tour produced by a baseline $k$-opt run. Improvements were considered at two stages: immediately after applying \textsc{Cycap} and after an additional optimization step, denoted by $k+C$ and $k + C+ k$, respectively. This metric provides insight into the ability of the heuristic to escape local optima.

Gap closure quantifies the relative improvement toward the optimal or best known solution. Informally, it is the average fraction of the optimality gap closed by the heuristic. Let \(T_i^o\) denote the cost of the tour after the initial $k$-opt run in experiment \(i\), \(T_i^f\) the cost after applying \textsc{Cycap} (or \textsc{Cycap} plus second $k$-opt), and \(T_{\text{opt}}\) the cost of the optimal or best known tour. The gap closure for run \(i\) is:
\[
\text{GC}_i = \frac{T_i^o - T_i^f}{T_i^o - T_{\text{opt}}}.
\]
The reported value is the average over $n=100$ successful runs, i.e., runs where the $k$-opt tour was improved: 
\[
\text{Gap Closure} = \frac{1}{n} \sum_{i=1}^n \text{GC}_i =  \frac{1}{n} \sum_{i=1}^n \frac{T_i^o - T_i^f}{T_i^o - T_{\text{opt}}}.
\]
For large instances where the globally optimal tour is unknown, \(T_{\text{opt}}\) is approximated by the best tour found in $100$ independent runs of the combined \(2+3+C+2+3\) strategy . High gap closure values indicate that the heuristic achieves substantial reductions in cost for those instances where it is able to find an improvement. 

Finally, running time captures the computational efficiency of the heuristic. We report the time for the heuristic by itself, as well as the total time for embedded runs. This analysis highlights whether the observed improvements come at a practical computational cost. 

When interpreting the upcoming results, it is important to consider the interaction between the \textsc{Cycap} heuristic and the baseline $k$-opt or $k^*$-opt algorithms. Symmetric instances typically allow $k$-opt to quickly reach strong local optima. In contrast, directed instances involve asymmetric arc costs; this makes local optima from $k$-opt weaker, even with the $k^*$-opt variant that we apply, and also increases the running time of $k$-opt and $k^*$-opt itself. In turn, this leads to greater potential for improvement. Reported success rates and gap closures reflect these structural differences, and one should expect a stronger performance of \textsc{Cycap} for directed TSP instances, for which it was primarily designed.

For symmetric problems, we report results for all three \textsc{Cycap} variants: \textsc{Cycap-F}, \textsc{Cycap-M}, and \textsc{Cycap-C}. For directed instances, we only report on \textsc{Cycap-F} and \textsc{Cycap-C}, and not \textsc{Cycap-M}. There are two reasons: First, success rate and gap closure of \textsc{Cycap-M} were similar to \textsc{Cycap-F}, but its minimum-mean cycle detection introduced significant computational overhead for directed problems of all sizes, making it impractical; second, as we will see, \textsc{Cycap-C} significantly outperformed both \textsc{Cycap-F} or \textsc{Cycap-M} in this setting.

\subsection{Success Rate}\label{sec:success-rate}

The success rate measures the proportion of runs in which the heuristic improved the initial tour produced by a $k$-opt run. Table~\ref{tab:cycap-success-symmetric} summarizes success rates for all three \textsc{Cycap} variants on symmetric instances, and Table~\ref{tab:cycap-success-directed} reports results for directed instances.

\begin{table}[h!]
\footnotesize
\centering
\caption{Success Rates on Symmetric Instances}
\label{tab:cycap-success-symmetric}
\renewcommand{\arraystretch}{1.2}
\begin{tabular}{
l@{\hspace{7pt}}|
>{\centering\arraybackslash}p{1.8cm}>{\centering\arraybackslash}p{1.8cm}>{\centering\arraybackslash}p{1.8cm}|
>{\centering\arraybackslash}p{1.8cm}>{\centering\arraybackslash}p{1.8cm}>{\centering\arraybackslash}p{2.0cm}
}
\toprule
\textbf{Data Set} & \textbf{$2+C$} & \textbf{$3+C$} & \textbf{$2+3+C$} & \textbf{$2+C+2$} & \textbf{$3+C+3$} & \textbf{$2+3+C+2+3$} \\
\midrule
\multicolumn{7}{l}{\textbf{Cycap-F}} \\
22cities  & 18\% & 24\% & 17\% & 18\% & 25\% & 17\% \\
30cities  & 36\% & 15\% & 10\% & 76\% & 61\% & 19\% \\
59cities  & 41\% & 46\% & 40\% & 43\% & 71\% & 63\% \\
pr76      & 30\% & 26\% & 19\% & 35\% & 34\% & 96\% \\
128cities & 77\% & 88\% & 68\% & 96\% & 100\%& 94\% \\
\midrule
\multicolumn{7}{l}{\textbf{Cycap-M}} \\
22cities  & 36\% & 87\% & 51\% & 37\% & 88\% & 53\% \\
30cities  & 28\% & 24\% & 11\% & 34\% & 64\% & 40\% \\
59cities  & 12\% & 21\% & 9\%  & 15\% & 24\% & 40\% \\
pr76      & 33\% & 14\% & 16\% & 51\% & 15\% & 18\% \\
128cities & 88\% & 93\% & 87\% & 96\% & 100\%& 100\%\\
\midrule
\multicolumn{7}{l}{\textbf{Cycap-C}} \\
22cities  & 25\% & 15\% & 13\% & 26\% & 71\% & 52\% \\
30cities  & 10\% & 5\%  & 4\%  & 15\% & 56\% & 68\% \\
59cities  & 13\% & 14\% & 10\% & 14\% & 73\% & 75\% \\
pr76      & 4\%  & 5\%  & 8\%  & 9\%  & 72\% & 60\% \\
128cities & 7\%  & 2\%  & 2\%  & 9\%  & 84\% & 85\% \\
\bottomrule
\end{tabular}
\end{table}

For symmetric instances (Table~\ref{tab:cycap-success-symmetric}), success rates vary significantly across variants and problem sizes. \textsc{Cycap-F} and \textsc{Cycap-M} generally outperform \textsc{Cycap-C} in $k+C$ runs, particularly on medium and large instances. The gap becomes very significant for the largest data set 128cities, where success rates for \textsc{Cycap-F} and \textsc{Cycap-M} range from 68\% to 93\% and \textsc{Cycap-C} exhibits a low percentage. \textsc{Cycap-C} shows dramatic improvement in $k+C+k$ runs that use $3$-opt, often surpassing 70\% on larger instances. This suggests that circulation-based adjustments are more effective when followed by an additional $3$-opt step.

\begin{table}[h!]
\footnotesize
\centering
\caption{Success Rates on Directed Instances}
\label{tab:cycap-success-directed}
\renewcommand{\arraystretch}{1.2}
\begin{tabular}{
l@{\hspace{10pt}}|
>{\centering\arraybackslash}p{1.8cm}>{\centering\arraybackslash}p{1.8cm}>{\centering\arraybackslash}p{1.8cm}|
>{\centering\arraybackslash}p{1.8cm}>{\centering\arraybackslash}p{1.8cm}>{\centering\arraybackslash}p{2.0cm}
}
\toprule
\textbf{Data Set} & \textbf{$2+C$} & \textbf{$3+C$} & \textbf{$2+3+C$} & \textbf{$2+C+2$} & \textbf{$3+C+3$} & \textbf{$2+3+C+2+3$} \\
\midrule
\multicolumn{7}{l}{\textbf{Cycap-F}} \\
ftv33    & 58\% & 55\% & 58\% & 62\% & 72\% & 91\% \\
ftv35    & 79\% & 88\% & 67\% & 91\% & 92\% & 100\% \\
ftv38    & 73\% & 67\% & 71\% & 92\% & 100\% & 100\% \\
ftv44    & 82\% & 91\% & 82\% & 100\% & 100\% & 100\% \\
ft53     & 94\% & 71\% & 62\% & 100\% & 100\% & 100\% \\
ftv70    & 89\% & 82\% & 78\% & 92\% & 100\% & 100\% \\
kro124p  & 88\% & 91\% & 89\% & 100\% & 100\% & 100\% \\
\midrule
\multicolumn{7}{l}{\textbf{Cycap-C}} \\
ftv33    & 100\% & 66\% & 64\% & 100\% & 96\% & 91\% \\
ftv35    & 100\% & 74\% & 71\% & 100\% & 91\% & 92\% \\
ftv38    & 100\% & 75\% & 86\% & 100\% & 95\% & 91\% \\
ftv44    & 100\% & 100\% & 94\% & 100\% & 100\% & 96\% \\
ft53     & 100\% & 82\% & 90\% & 100\% & 96\% & 92\% \\
ftv70    & 100\% & 100\% & 100\% & 100\% & 100\% & 100\% \\
kro124p  & 100\% & 100\% & 100\% & 100\% & 100\% & 100\% \\
\bottomrule
\end{tabular}
\end{table}

For directed instances (Table~\ref{tab:cycap-success-directed}), Both \textsc{Cycap-F} and \textsc{Cycap-C} achieve very high success rates, often above 75\% for $k+C$ runs and more than $90\%$ for most $k+C+k$ runs. Notably, for \textsc{Cycap-C}, success rates for the $k+C$ runs are already quite similar to the excellent $k+C+k$ results for both variants. The results confirm that the heuristic is particularly well-suited for directed problems, where local optima from $k^*$-opt are weaker and allow more room for improvement. 

\subsection{Gap Closure}\label{sec:gap-closure}

The gap closure measures the relative improvement toward the optimal or best known solution. We compute the fraction of the initial optimality gap closed by the heuristic, averaged over all runs that improved the $k$-opt tour. Tables~\ref{tab:cycap-gap-symmetric} and \ref{tab:cycap-gap-directed} summarize these results for symmetric and directed instances, respectively.

\begin{table}[h!]
\footnotesize
\centering
\caption{Gap Closure on Symmetric Instances}
\label{tab:cycap-gap-symmetric}
\renewcommand{\arraystretch}{1.2}
\begin{tabular}{
l@{\hspace{10pt}}|
>{\centering\arraybackslash}p{2cm}>{\centering\arraybackslash}p{2cm}>{\centering\arraybackslash}p{2cm}|
>{\centering\arraybackslash}p{2cm}>{\centering\arraybackslash}p{2cm}>{\centering\arraybackslash}p{2.2cm}
}
\toprule
\textbf{Data Set} & \textbf{$2+C$} & \textbf{$3+C$} & \textbf{$2+3+C$} & \textbf{$2+C+2$} & \textbf{$3+C+3$} & \textbf{$2+3+C+2+3$} \\
\midrule
\multicolumn{7}{l}{\textbf{Cycap-F}} \\
22cities  & 28.79\% & 16.46\% & 14.73\% & 55.26\% & 77.41\% & 87.76\% \\
30cities  & 27.43\% & 25.84\% & 25.40\% & 37.24\% & 64.41\% & 36.65\% \\
59cities  & 28.33\% & 20.59\% & 23.78\% & 35.38\% & 44.09\% & 33.55\% \\
pr76      & 14.12\% & 11.75\% & 18.71\% & 19.47\% & 32.49\% & 35.59\% \\
128cities & 3.57\%  & 3.42\%  & 4.38\%  & 34.86\% & 90.28\% & 49.75\% \\
\midrule
\multicolumn{7}{l}{\textbf{Cycap-M}} \\
22cities  & 47.26\% & 62.04\% & 73.82\% & 54.77\% & 59.36\% & 70.37\% \\
30cities  & 41.07\% & 49.50\% & 44.24\% & 63.72\% & 65.79\% & 72.14\% \\
59cities  & 25.34\% & 29.08\% & 45.71\% & 32.25\% & 32.81\% & 61.63\% \\
pr76      & 13.44\% & 14.04\% & 9.64\%  & 41.69\% & 30.46\% & 33.39\% \\
128cities & 24.22\% & 17.39\% & 26.37\% & 32.65\% & 53.78\% & 49.81\% \\
\midrule
\multicolumn{7}{l}{\textbf{Cycap-C}} \\
22cities  & 56.56\% & 35.56\% & 22.02\% & 63.66\% & 47.80\% & 96.40\% \\
30cities  & 46.86\% & 35.52\% & 34.21\% & 72.32\% & 92.46\% & 95.14\% \\
59cities  & 37.87\% & 21.73\% & 29.18\% & 73.65\% & 73.74\% & 65.53\% \\
pr76      & 34.59\% & 31.95\% & 22.74\% & 46.34\% & 57.34\% & 38.31\% \\
128cities & 3.57\%  & 19.77\% & 14.11\% & 34.86\% & 67.98\% & 62.69\% \\
\bottomrule
\end{tabular}
\end{table}

For symmetric instances (Table \ref{tab:cycap-gap-symmetric}), gap closure values indicate that improvements are often substantial already right after completion of \textsc{Cycap}, i.e., for the $k+C$ runs, and then improve further through a second $k$-opt run, i.e., for the $k+C+k$ runs. 
 \textsc{Cycap-M} typically achieves slightly higher gap closure than \textsc{Cycap-F} and \textsc{Cycap-C} in $k+C$ runs. \textsc{Cycap-C} then performs the best in $k+C+k$ runs, where it frequently closes more than 60\% of the gap. This suggests that the use of a tour-alternating circulation, as opposed to a single cycle, in this variant, creates more potential for improvement for a second $k$-opt run.

\begin{table}[h!]
\footnotesize
\centering
\caption{Gap Closure on Directed Instances}
\label{tab:cycap-gap-directed}
\renewcommand{\arraystretch}{1.2}
\begin{tabular}{
l@{\hspace{10pt}}|
>{\centering\arraybackslash}p{2cm}>{\centering\arraybackslash}p{2cm}>{\centering\arraybackslash}p{2cm}|
>{\centering\arraybackslash}p{2cm}>{\centering\arraybackslash}p{2cm}>{\centering\arraybackslash}p{2.2cm}
}
\toprule
\textbf{Data Set} & \textbf{$2+C$} & \textbf{$3+C$} & \textbf{$2+3+C$} & \textbf{$2+C+2$} & \textbf{$3+C+3$} & \textbf{$2+3+C+2+3$} \\
\midrule
\multicolumn{7}{l}{\textbf{Cycap-F}} \\
ftv33    & 17.19\% & 16.22\% & 16.39\% & 29.17\% & 34.19\% & 38.56\% \\
ftv35    & 15.16\% & 12.54\% & 14.25\% & 21.55\% & 43.15\% & 27.09\% \\
ftv38    & 12.63\% & 10.19\% & 10.58\% & 20.41\% & 24.83\% & 23.76\% \\
ftv44    & 10.28\% & 11.75\% & 11.93\% & 15.71\% & 32.81\% & 31.75\% \\
ft53     & 5.21\%  & 5.43\%  & 5.50\%  & 12.22\% & 46.94\% & 30.53\% \\
ftv70    & 7.65\%  & 8.80\%  & 9.20\%  & 12.45\% & 32.44\% & 29.21\% \\
kro124p  & 6.48\%  & 6.57\%  & 8.01\%  & 11.19\% & 28.29\% & 17.43\% \\
\midrule
\multicolumn{7}{l}{\textbf{Cycap-C}} \\
ftv33    & 70.55\% & 24.95\% & 30.92\% & 74.40\% & 45.23\% & 63.43\% \\
ftv35    & 67.13\% & 52.27\% & 32.67\% & 70.50\% & 71.41\% & 61.95\% \\
ftv38    & 83.01\% & 52.96\% & 47.28\% & 84.46\% & 76.93\% & 67.51\% \\
ftv44    & 85.64\% & 38.96\% & 58.42\% & 86.13\% & 65.02\% & 72.06\% \\
ft53     & 86.07\% & 50.95\% & 48.91\% & 86.64\% & 66.41\% & 57.91\% \\
ftv70    & 90.21\% & 83.97\% & 79.94\% & 90.67\% & 87.13\% & 81.98\% \\
kro124p  & 87.95\% & 49.99\% & 55.93\% & 89.81\% & 61.11\% & 60.63\% \\
\bottomrule
\end{tabular}
\end{table}

For directed instances (Table \ref{tab:cycap-gap-directed}), \textsc{Cycap-F} achieves modest gap closure, between about 5\% and 17\% in $k+C$ runs and between 11\% and 47\% in $k+C+k$ runs. In contrast, \textsc{Cycap-C} consistently delivers dramatic improvements, often closing 50\% to 80\% of the gap already for the $k+C$ runs, and even more for the $k+C+k$ runs. These results again highlight the advantage of circulation-based adjustments for directed problems. Recall that \textsc{Cycap-C} also performed very well in view of success rate in the directed setting, so we obtain the favorable combination of both strong success rates and gap closure. 

\subsection{Running Time}\label{sec:running-time}

The running time captures the computational effort required by the \textsc{Cycap} heuristic and its integration with $k$-opt algorithms. We first report the time for \textsc{Cycap} alone, followed by the total time for embedded runs that include the initial and final $k$-opt runs.

\paragraph{\textsc{Cycap}-only times}  
Tables~\ref{tab:cycap-only-symmetric} and \ref{tab:cycap-only-directed} show the average computational times for \textsc{Cycap} alone on symmetric and directed instances. The times are based on a start from a local optimum found via $2$-opt, and very similar for a start from $3$-opt or $2+3$-opt. 

\begin{table}[h!]
\footnotesize
\centering
\caption{\textsc{Cycap}-only Running Times (in seconds) for Symmetric Instances}
\label{tab:cycap-only-symmetric}
\renewcommand{\arraystretch}{1.2}
\begin{tabular}{
l@{\hspace{10pt}}
>{\centering\arraybackslash}p{2cm}
>{\centering\arraybackslash}p{2cm}
>{\centering\arraybackslash}p{2cm}
}
\toprule
\textbf{Data Set} & \textbf{Cycap-F} & \textbf{Cycap-M} & \textbf{Cycap-C} \\
\midrule
22cities  & 0.099 & 0.201 & 0.117 \\
30cities  & 0.604 & 0.863 & 0.314 \\
59cities  & 5.165 & 8.753 & 1.556 \\
pr76      & 28.226 & 9.508 & 12.475 \\
128cities & 347.163 & 67.996 & 14.508 \\
\bottomrule
\end{tabular}
\end{table}

\begin{table}[h!]
\footnotesize
\centering
\caption{\textsc{Cycap}-only Running Times (in seconds) for Directed Instances}
\label{tab:cycap-only-directed}
\renewcommand{\arraystretch}{1.2}
\begin{tabular}{
l@{\hspace{10pt}}
>{\centering\arraybackslash}p{2cm}
>{\centering\arraybackslash}p{2cm}
}
\toprule
\textbf{Data Set} & \textbf{Cycap-F} & \textbf{Cycap-C} \\
\midrule
ftv33    & 0.783  & 0.417 \\
ftv35    & 1.067  & 0.631 \\
ftv38    & 1.341  & 0.646 \\
ftv44    & 2.481  & 0.625 \\
ft53     & 4.233  & 1.401 \\
ftv70    & 19.778 & 1.929 \\
kro124p  & 81.316 & 4.786 \\
\bottomrule
\end{tabular}
\end{table}

For symmetric instances (Table~\ref{tab:cycap-only-symmetric}), \textsc{Cycap-C} is the fastest variant, and the difference in scaling becomes pronounced on large datasets. \textsc{Cycap-M} scales better than \textsc{Cycap-F}, starting with medium-sized symmetric problems. For directed instances (Table \ref{tab:cycap-only-directed}), \textsc{Cycap-C} is much faster than \textsc{Cycap-F} (or \textsc{Cycap-M}), by an order of magnitude for the larger problems.

\paragraph{Embedded runs with $k$-opt}  
Tables~\ref{tab:cycap-time-symmetric} and \ref{tab:cycap-time-directed} report the total time for embedded runs, including the initial and final $k$-opt runs.

\begin{table}[h!]
\footnotesize
\centering
\caption{Running Times (in seconds) for Embedded Runs on Symmetric Instances}
\label{tab:cycap-time-symmetric}
\renewcommand{\arraystretch}{1.2}
\begin{tabular}{
l@{\hspace{10pt}}|
>{\centering\arraybackslash}p{2cm}
>{\centering\arraybackslash}p{2cm}
>{\centering\arraybackslash}p{2cm}|
>{\centering\arraybackslash}p{2cm}
>{\centering\arraybackslash}p{2cm}
>{\centering\arraybackslash}p{2.2cm}
}
\toprule
\textbf{Data Set} & \textbf{$2+C$} & \textbf{$3+C$} & \textbf{$2+3+C$} & \textbf{$2+C+2$} & \textbf{$3+C+3$} & \textbf{$2+3+C+2+3$} \\
\midrule
\multicolumn{7}{l}{\textbf{Cycap-F}} \\
22cities  & 0.184 & 0.153 & 0.198 & 0.219 & 0.185 & 0.271 \\
30cities  & 0.561 & 0.442 & 0.677 & 0.691 & 0.979 & 1.488 \\
59cities  & 9.741 & 10.040 & 11.098 & 11.419 & 52.711 & 61.441 \\
pr76      & 41.680 & 42.827 & 51.229 & 57.838 & 77.402 & 127.907 \\
128cities & 150.181 & 323.692 & 489.901 & 453.466 & 528.935 & 501.799 \\
\midrule
\multicolumn{7}{l}{\textbf{Cycap-M}} \\
22cities  & 0.286 & 0.277 & 0.329 & 0.288 & 0.309 & 0.383 \\
30cities  & 1.099 & 1.080 & 1.353 & 1.690 & 1.310 & 1.650 \\
59cities  & 13.636 & 9.127 & 11.098 & 29.568 & 11.485 & 18.420 \\
pr76      & 39.616 & 29.126 & 52.470 & 54.655 & 34.542 & 56.312 \\
128cities & 187.663 & 324.759 & 602.365 & 336.789 & 727.101 & 676.920 \\
\midrule
\multicolumn{7}{l}{\textbf{Cycap-C}} \\
22cities  & 0.331 & 0.461 & 0.524 & 0.387 & 0.516 & 0.633 \\
30cities  & 0.969 & 0.987 & 1.644 & 1.437 & 1.423 & 2.188 \\
59cities  & 6.389 & 5.435 & 31.440 & 20.509 & 19.660 & 34.698 \\
pr76      & 47.440 & 54.480 & 99.586 & 53.006 & 55.710 & 119.579 \\
128cities & 146.683 & 345.069 & 588.004 & 199.546 & 503.362 & 1013.819 \\
\bottomrule
\end{tabular}
\end{table}

\begin{table}[h!]
\footnotesize
\centering
\caption{Running Times (in seconds) for Embedded Runs on Directed Instances}
\label{tab:cycap-time-directed}
\renewcommand{\arraystretch}{1.2}
\begin{tabular}{
l@{\hspace{10pt}}|
>{\centering\arraybackslash}p{2cm}
>{\centering\arraybackslash}p{2cm}
>{\centering\arraybackslash}p{2cm}|
>{\centering\arraybackslash}p{2cm}
>{\centering\arraybackslash}p{2cm}
>{\centering\arraybackslash}p{2.2cm}
}
\toprule
\textbf{Data Set} & \textbf{$2+C$} & \textbf{$3+C$} & \textbf{$2+3+C$} & \textbf{$2+C+2$} & \textbf{$3+C+3$} & \textbf{$2+3+C+2+3$} \\
\midrule
\multicolumn{7}{l}{\textbf{Cycap-F}} \\
ftv33    & 3.582  & 10.031  & 12.506  & 4.975  & 17.976  & 20.102 \\
ftv35    & 4.256  & 11.768  & 13.902  & 5.196  & 20.836  & 23.757 \\
ftv38    & 3.892  & 17.569  & 21.806  & 8.549  & 30.992  & 36.040 \\
ftv44    & 6.429  & 29.741  & 34.528  & 15.259 & 52.516  & 59.623 \\
ft53     & 16.509 & 55.174  & 65.631  & 18.978 & 97.264  & 110.023 \\
ftv70    & 52.305 & 220.713 & 263.416 & 59.463 & 404.245 & 446.948 \\
kro124p  & 457.467 & 901.624 & 1203.694 & 588.654 & 1842.285 & 2064.626 \\
\midrule
\multicolumn{7}{l}{\textbf{Cycap-C}} \\
ftv33    & 2.166  & 4.479  & 5.982  & 2.400  & 8.205   & 9.842 \\
ftv35    & 3.216  & 5.675  & 6.885  & 3.033  & 10.390  & 14.332 \\
ftv38    & 2.191  & 6.913  & 9.661  & 2.919  & 12.618  & 16.558 \\
ftv44    & 3.386  & 7.231  & 11.101 & 3.646  & 13.903  & 16.910 \\
ft53     & 5.982  & 12.610 & 17.852 & 6.134  & 23.843  & 30.537 \\
ftv70    & 23.570 & 14.198 & 43.772 & 29.104 & 25.878  & 61.456 \\
kro124p  & 54.231 & 53.771 & 157.813 & 65.509 & 102.540 & 204.485 \\
\bottomrule
\end{tabular}
\end{table}
 
For symmetric instances (Table \ref{tab:cycap-time-symmetric}), running times of all three embedded variants are quite similar across the board. Recall that \textsc{Cycap-C}, when run alone, scales better than \textsc{Cycap-F} or \textsc{Cycap-M}. Due to the higher gap closure for \textsc{Cycap-C} in $k+C+k$ runs compared to $k+C$ runs, the second $k$-opt run in that setting goes through more iterations due to the room for improvement, and thus takes longer. In pure running time numbers, \textsc{Cycap-C} `loses' some of its overall speed advantage.

For directed instances (Table \ref{tab:cycap-time-directed}), \textsc{Cycap-C} is much faster than \textsc{Cycap-F}, particularly on large problems, where differences can reach an order of magnitude. Recall that, for these instances, \textsc{Cycap-C} is significantly faster than \textsc{Cycap-F} when run by itself. Additionally, \textsc{Cycap-C} already has an excellent gap closure in the $k+C$ setting, which means that a second $k$-opt run typically concludes comparatively quickly. In combination, this leads to the best performance among variants. The efficiency, combined with strong improvement capability as demonstrated by success rate and gap closure, makes \textsc{Cycap-C} competitive for large directed problems.

\section{Final Remarks}\label{sec:conclusion}

We proposed \textsc{Cycap}, a ``cycle cancel and patch'' heuristic for the directed (and symmetric) TSP that leverages a special bipartite encoding of the residual network for a tour to detect and cancel negative tour-alternating cycles or circulations. The key structural fact is that tour–alternating cancels preserve local balance and return either a tour or a small collection of subtours (possibly with some isolated vertices); in particular, any simple cycle whose cancel yields a new tour must be tour–alternating. Via a so-called separated graph, detecting such cycles or circulations reduces to standard negative-cycle or minimum-circulation routines. When cancels produce subtours, a lightweight patching step quickly reconnects the flow into a single tour.

In computational experiments across benchmark instances, \textsc{Cycap} exhibits a practially viable combination of success rate, gap closure, and running time, particularly so for directed instances and when followed by a second $k$-opt refinement. Among the implementation variants, \textsc{Cycap-C} (based on the search for an improving tour-alternating circulation via an LP on the separated graph) consistently performed best, with high success rates and large gap closure at practical running times. On symmetric instances, the other variants  \textsc{Cycap-F} and \textsc{Cycap-M} performed similarly well to \textsc{Cycap-C} in terms of success rate and gap closure, but at slower running times. In practice, some simple usage rules emerge: By design, \textsc{Cycap} performs better for directed TSP instances than for symmetric instances; circulation-based updates generally perform best; and the computational effort for a second $k$-opt run is worth it.

There are some interesting directions of future work. In particular, we are interested in understanding (and possibly exploiting) the fact that cancels typically produce only few subtours. In our practical experiments, the median number of subtours after a cancel is small (often $2$–$4$ and rarely above single digits) across all instances and variants. As the patching routine is a greedy part of \textsc{Cycap} that might increase cost, the fact that only few subtours are created and thus only few patches are necessary is one of the reasons for the good performance of \textsc{Cycap}. As expected, success rates are highest when the count is low. Some experiments on this behaviour can be found in \cite{z-25}. 

Clearly, the number of subtours created is related to the length of tour-alternating cycles. We observed that short tour–alternating cycles dominate in practice (lengths $4,6,8,10$). In \cite{z-25}, some structural observations partially explain this behavior: A simple tour-alternating cycle of length $2k$ cannot create more than $k$ subtours; in fact, it can only be exactly $k$ or strictly fewer than $k-1$ subtours. Thus, short cycles translate immediately into few subtours. A permutation viewpoint allows a characterization of the possible number of subtours for short tour-alternating cycles. For example, if the length is divisible by four, a cancel necessarily produces subtours \cite{z-25}. We are interested in a complete characterization of this behavior for tour-alternating cycles and circulations of $2k$ arcs. Algorithmically, the prevalence and benefits of short tour-alternating cycles suggest that it would be promising to design a combinatorial algorithm to directly search for tour-alternating cycles of bounded length, without making use of cycle-detection or linear programming subroutines.

Beyond a focused interest in short cycles, two practical follow-ups are natural. First, it could be valuable to implement and compare alternative cycle/circulation detection routines on the separated graph - beyond the current Floyd–Warshall, minimum–mean, and circulation LP variants - to include other standard options. Second, our proof-of-concept implementation clearly can be refined for larger instances. A particularly useful step could be the adoption of memory–friendly data structures for the separated graph.

\paragraph*{\bf Acknowledgments} This work was supported by the Air Force Office of Scientific Research under award number FA9550-24-1-0240.

\bibliographystyle{apalike}

\end{document}